\begin{document}
\baselineskip=22pt

\theoremstyle{plain}
\newtheorem{thm}{Theorem}
\newtheorem{pro}[thm]{Proposition}
\newtheorem{cor}[thm]{Corollary}
\newtheorem{con}[thm]{Conjecture}
\newtheorem{lem}[thm]{Lemma}

\theoremstyle{definition}
\newtheorem{prob}[thm]{Problem}
\newtheorem{rem}[thm]{Remark}
\newtheorem{example}[thm]{Example}

\newcommand{\m}{{\rm mult}}
\newcommand{\la}{\lambda}
\newcommand{\im}{{\rm Im}}
\newcommand{\x}{{\bf x}}
\newcommand{\w}{{\bf w}}
\newcommand{\bu}{{\bf u}}
\newcommand{\bg}{{\rm BG}}

\title{\bf  Cyclic decomposition of $k$-permutations \\and  eigenvalues of the arrangement graphs}

\author{\large Bai Fan Chen$^{\,\rm 1}$ \quad \quad  Ebrahim Ghorbani$^{\,\rm 2,3}$  \quad \quad Kok Bin Wong$^{\,\rm 1}$\\[.4cm]
{\sl $^{\rm 1}$ Institute of Mathematical Sciences, University of Malaya,}\\
{\sl 50603 Kuala Lumpur, Malaysia}\\[0.3cm]
{\sl $^{\rm 2}$Department of Mathematics, K.N. Toosi University of Technology,}\\
{\sl P.O. Box 16315-1618, Tehran, Iran}\\[0.3cm]
{\sl $^{\rm 3}$School of Mathematics, Institute for Research in Fundamental
Sciences (IPM),}\\{\sl P.O. Box
19395-5746, Tehran, Iran }
\\[0.5cm]{
$\mathsf{tufofo1120@gmail.com}$ \quad\quad  $\mathsf{e\_ghorbani@ipm.ir}$ \quad\quad  $\mathsf{kbwong@um.edu.my}$}}

 \maketitle

\begin{abstract}\noindent
 The $(n,k)$-arrangement graph  $A(n,k)$ is a graph with all the $k$-permutations of an $n$-element set as vertices where two $k$-permutations are adjacent if they agree in exactly $k-1$ positions.
We introduce a cyclic decomposition for $k$-permutations and show that this gives rise to a very fine equitable partition
of $A(n,k)$. This equitable partition can be employed to compute the complete set of eigenvalues (of the adjacency matrix) of $A(n,k)$. Consequently, we determine the eigenvalues of $A(n,k)$ for small values of $k$.
Finally, we show that any eigenvalue of the Johnson graph $J(n,k)$ is an eigenvalue of $A(n,k)$ and that $-k$ is the smallest eigenvalue of $A(n,k)$ with multiplicity ${\cal O}(n^k)$ for fixed $k$.

\vspace{5mm}
\noindent {\it Keywords:}  $k$-permutation, cyclic decomposition, arrangement graph, eigenvalue of graph  \\[.1cm]
\noindent {\it AMS Mathematics Subject Classification\,(2010):}   05A05, 05C50
\end{abstract}

\section{Introduction}

Let $G$ be a simple graph with $\nu$ vertices. The {\em adjacency matrix}  of   $G$ is a $\nu\times\nu$ matrix where its rows and
columns are indexed by  the vertex set of $G$  and its $(u, v)$-entry  is $1$ if the vertices $u$ and
$v$ are adjacent and $0$ otherwise. By the {\em eigenvalues} of $G$ we mean the eigenvalues of its adjacency matrix.

For a positive integer $\ell$, let $[\ell]:=\{1,\ldots,\ell\}$. For positive integers $k,n$ with $k\le n$, a {\em $k$-permutation} of $[n]$ is an injective function from $[k]$ to $[n]$. When $k=n$, a $k$-permutation is a permutation.
So any $k$-permutation $\pi$ can be represented by a vector $(i_1,\ldots,i_k)$ where $\pi(j)=i_j$ for $j=1,\ldots,k$.
We denote the set of all $k$-permutations of $[n]$ by $V(n,k)$.
The set $\{i_1,\ldots,i_k\}$ is the {\em image} of $\pi$ and denoted $\im(\pi)$.
Unlike permutations which have a decomposition into cycles, $k$-permutations in general do not have such a decomposition.
We observe that a decomposition of a $k$-permutation into cycles and `paths' is possible
where paths are the same as cycles except that in a path the last element is not mapped to the first element.
We then define the cycle type for a $k$-permutation $\pi$ to be the list of integers consisting of the lengths of the cycles and the paths appeared in the decomposition of $\pi$. This gives rise to the `cycle-type partition' of $V(n,k)$ in which each class consists of all elements of $V(n,k)$ sharing the same cycle type. The reason for studying the cycle-type partition of $k$-permutations will become clear below.

The {\em $(n,k)$-arrangement graph}  $A(n,k)$ has $V(n,k)$ as its vertices, and two $k$-permutations $\pi=(u_1,\ldots,u_k)$ and $\rho=(v_1,\ldots,v_k)$ are adjacent if they agree in exactly $k-1$ positions, i.e. if for exactly one $i_0$, $u_{i_0}\ne v_{i_0}$ and for all $i\ne i_0$, $u_i=v_i$.
The family of arrangement graphs was first introduced in \cite{dt} as an interconnection network model for parallel computation.
In the interconnection network model, each processor has its own memory unit and
communicates with the other processors through a topological network, i.e. a graph.
For this purpose, the arrangement graphs possess many nice properties such as having small diameter, a hierarchical structure, vertex and edge symmetry, simple shortest path routing, high connectivity, etc. Many properties of arrangement graphs have been studied by a number of authors, see, e.g. \cite{cll,cly,cgq,cqs,cc,ttt,zx}.
Another family of graphs with the same nature as the arrangement graphs are the derangement graphs.
 The {\em $n$-derangement graph} is a graph whose vertices correspond to all the permutations of $[n]$ where two permutations are adjacent if they differ in all $n$ positions. It is known that the eigenvalues of the derangement graph are integers (see \cite{Babai, DS, Lub, Ram}). For other properties of the eigenvalues of the derangement graph, we refer the reader to \cite{Ku-Wales, Ku-Wong, Renteln}.

As an application of the cycle-type partition of $V(n,k)$, we consider the problem of determining the eigenvalues of the arrangment graphs. It turns out that the cycle-type partition of $V(n,k)$ is indeed an equitable partition of the graph $A(n,k)$.
Normally, the eigenvalues of equitable partitions of a graph give a subset of the set of eigenvalues of the graph.
However, in view of a result of Godsil and McKay \cite{gm}, the cycle-type partition of $V(n,k)$ is fine enough to give the complete set of eigenvalues as well as their multiplicities. Consequently, we will be able to determine the eigenvalues of $A(n,k)$ for small values of $k$. We also show that any eigenvalue of the  Johnson graph $J(n,k)$ is an eigenvalue of $A(n,k)$. (Recall that the  Johnson graph $J(n,k)$ has all the $k$-subsets of $[n]$ as vertices where two $k$-subsets are adjacent if they intersect in exactly $k-1$ elements.) Finally, we prove that that $-k$ is the smallest eigenvalue of $A(n,k)$ with multiplicity ${\cal O}(n^k)$ for fixed $k$. We shall close the paper by some open problems on eigenvalues of the arrangement graphs.

\section{Cyclic decomposition of a $k$-permutation and basic graphs}

A permutation can be decomposed into disjoint cycles.
However, in general such a decomposition does not exist for a $k$-permutation.
In this section, we demonstrate  how a decomposition of $k$-permutations is possible by taking into account a relaxation on cycles.

\subsection{Decomposition of $k$-permutation into cycles and paths}

We call a $k$-permutation $\pi$ a {\em path} of length $\ell$ if
for some $u_1,\ldots,u_\ell$ and $v$, $\pi(u_t)=u_{t+1}$ for $t=1,\ldots,\ell-1$ and $\pi(u_\ell)=v$ such that
$u_t\in[k]$ for all  $t=1,\ldots,\ell$ and $v\not\in[k]$.
We denote such a path $\pi$ by $(u_1\ldots u_\ell\,v]$.
As usual, $(u_1\ldots u_\ell)$ denotes a cycle of length $\ell$.
The same method for decomposing permutations into disjoint cycles can be employed for decomposition of $k$-permutations into disjoint cycles and paths. We call this decomposition the {\em cyclic decomposition} of $k$-permutations.
Here is examples of decompositions of some 5-permutations:
\begin{align*}
(1,2,3,i,j)&=(1)(2)(3)(4\,i](5\,j],\\
(2,3,4,i,j)&=(1\,2\,3\,4\,i](5\,j],\\
(2,i,j,5,4)&=(1\,2\,i](3\,j](4\,5),
\end{align*}
where $i,j>5$. 

We put this observation formally in the following proposition. The straightforward proof is similar to the case of permutations.

\begin{pro} Any $k$-permutation is a product of disjoint cycles and paths. This decomposition is unique up to the order in which the cycles and paths are written.
\end{pro}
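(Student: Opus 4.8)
The plan is to adapt the standard orbit argument for the cycle decomposition of permutations, the only new feature being the ``open'' orbits that leave $[k]$. First I would encode $\pi$ as a functional digraph $D$ on the vertex set $[n]$, placing an arc from $i$ to $\pi(i)$ for each $i\in[k]$ and no arc out of the elements of $[n]\setminus[k]$ (where $\pi$ is undefined). Two elementary observations drive everything: every $i\in[k]$ has out-degree exactly $1$ while every element of $[n]\setminus[k]$ has out-degree $0$; and since $\pi$ is injective, every vertex has in-degree at most $1$, with in-degree $1$ precisely for the elements of $\im(\pi)$.

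A directed graph in which every in-degree and every out-degree is at most $1$ splits into vertex-disjoint directed paths and directed cycles, so the next step is simply to classify the components of $D$. A component that is a directed cycle has all its vertices of in-degree and out-degree $1$, hence lies entirely in $[k]\cap\im(\pi)$; restricting $\pi$ to it yields a cycle $(u_1\ldots u_\ell)$ in the stated sense. A component that is a directed path ends at its unique vertex of out-degree $0$, which therefore lies in $[n]\setminus[k]$ and plays the role of $v$, while every earlier vertex has out-degree $1$ and hence lies in $[k]$; restricting $\pi$ gives exactly a path $(u_1\ldots u_\ell\,v]$ as defined. Because the components are vertex-disjoint and, since each $i\in[k]$ has out-degree $1$, together use every element of $[k]$ exactly once, the product of these cycles and paths reproduces $\pi$, which gives existence.

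For uniqueness I would argue that any cyclic decomposition of $\pi$ must arise in this way. Each cycle or path appearing in a decomposition is, by definition, a collection of arcs $i\mapsto\pi(i)$ forming a directed cycle or a maximal directed path in $D$; since the decomposition is disjoint and exhausts $[k]$, these arc sets are forced to be precisely the components of $D$. As the component partition of $D$ is intrinsic to $\pi$, the list of cycles and paths is determined, and the only remaining freedom is the order in which the factors are written (together with the usual cyclic rotation of the entries within a cycle).

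I expect the only point requiring genuine care — and the reason the statement is not completely immediate from the permutation case — to be the verification that each path component terminates outside $[k]$ and cannot ``start in the middle.'' Concretely, one checks that following $\pi$ forward from a source $u_1\notin\im(\pi)$ produces no repeated vertex (an easy consequence of injectivity, since a repeat would force $u_1$ to have a predecessor) and so must eventually exit $[k]$, and that every non-cyclic orbit does contain such a source. Once the degree bookkeeping is in place this is routine, which is why the proposition can be stated with only the remark that the argument mirrors the permutation case.
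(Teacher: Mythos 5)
Your proof is correct and takes essentially the approach the paper intends: the paper omits the argument as ``similar to the case of permutations,'' and your functional digraph $D$ is precisely the basic graph ${\rm BG}(\pi)$ that the paper introduces immediately afterwards (up to the isolated vertices of $[n]\setminus([k]\cup{\rm Im}(\pi))$, which you should explicitly discard). Your degree bookkeeping and component analysis are exactly the formalization of the standard orbit argument that the paper alludes to, so there is nothing genuinely different here.
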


In some applications, it would be useful to consider a graphical representation of a $k$-permutation $\pi$.
This can be done simply by constructing a (directed) graph with vertices $[k]\cup\im(\pi)$ and with the set of arcs $\{(u,\pi(u))\mid u\in[k]\}$. We call the resulting graph, the {\em basic graph} of $\pi$ and denote it by $\bg(\pi)$.
It is seen that $\bg(\pi)$ consists of a union of directed cycle graphs and path graphs where the cycle and path graphs correspond to the cycles and paths in the cyclic decomposition of $\pi$, respectively.
It turns out that in $\bg(\pi)$   all the edges in the cycles and paths  have the same directions, the vertices  of $[k]\cap\im(\pi)$ have degree two, other vertices have degree one, and that
 any vertex of $\im(\pi)\setminus[k]$ is the head of a (directed) path.
In Table~\ref{part3}, all non-isomorphic basic graphs of 3-permutations are depicted where the  edges are shown without directions and the black and white vertices represent elements of $[k]$ and $\im(\pi)\setminus[k]$, respectively.
\begin{table}
  \centering
  \begin{tabular}{cllc}
\hline
  $3$-permutation & Decomposition & Partition of 3& Basic graph\\
   \hline
  $(1,2,3)$ & $(1)(2)(3)$ & $111$ &\includegraphics[width=1.6cm]{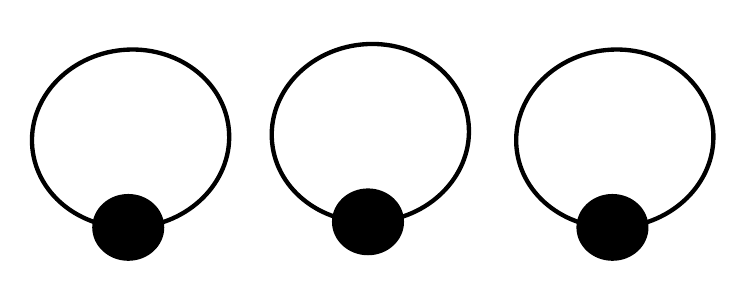}\\
  $(1,3,2)$ & $(1)(2\,3)$& $12$ &\includegraphics[width=1.5cm]{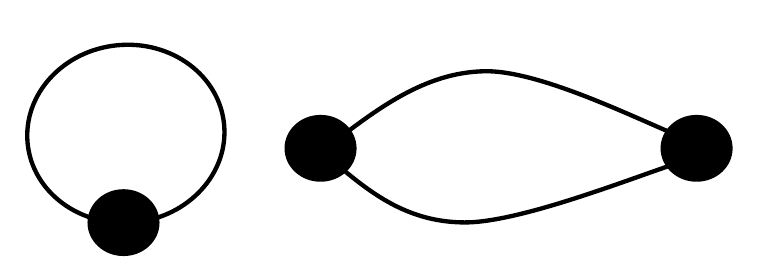}\\
   $(2,3,1)$ & $(1\,2\,3)$& $3$&\includegraphics[width=1.2cm]{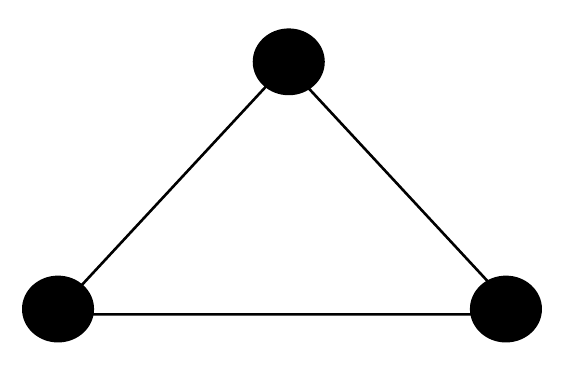}\\
  $(1,2,i)$ & $(1)(2)(3\,i]$& $111'$ &\includegraphics[width=1.4cm]{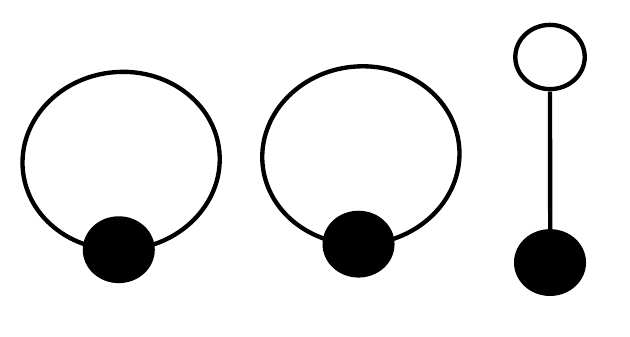}\\
  $(1,3,i)$ & $(1)(2\,3\,i]$& $12'$ &\includegraphics[width=1.1cm]{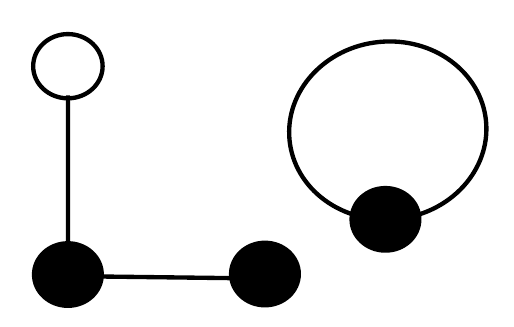} \\
  $(2,3,i)$ & $(1\,2\,3\,i]$& $3'$ &\includegraphics[width=.9cm]{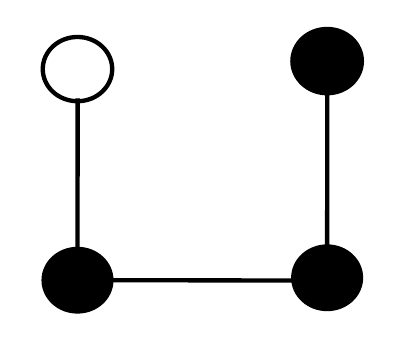}\\
  $(2,1,i)$& $(1\,2)(3,\,i]$& $21'$&\includegraphics[width=1.4cm]{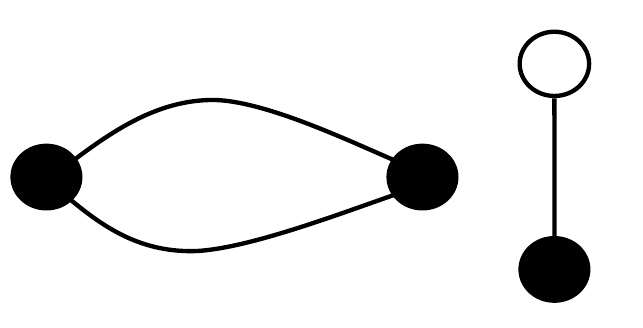} \\
  $(1,i,j)$& $(1)(2\,i](3\,j]$& $11'1'$ &\includegraphics[width=1.25cm]{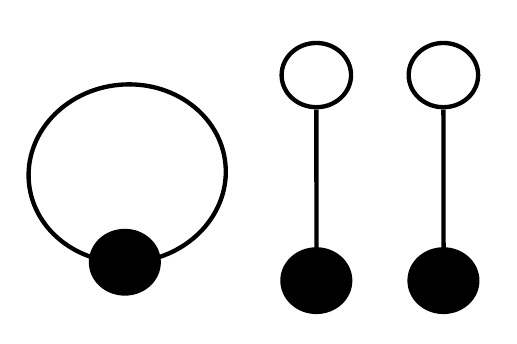}\\
  $(2,i,j)$& $(1\,2\,i](3\,j]$& $1'2'$&\includegraphics[width=1cm]{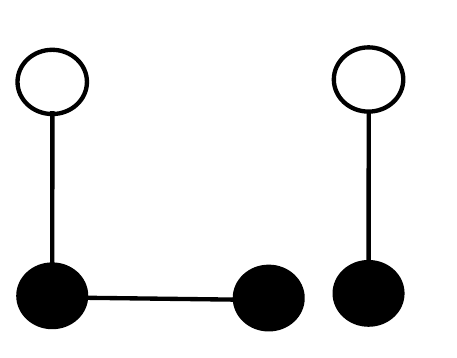}\\
 $(i,j,\ell)$& $(1\,i](2\,j](3\,\ell]$& $1'1'1'$& \includegraphics[width=.9cm]{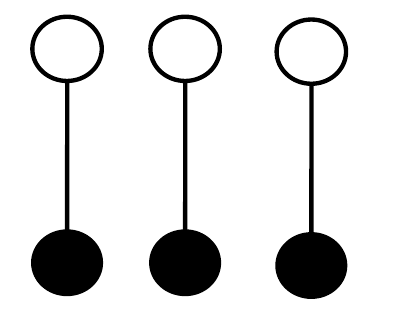} \\
  \hline
\end{tabular}
  \caption{ The cycle structures of 3-permutations (here $i,j,\ell>3$) 
}\label{part3}
\end{table}

We usually consider basic graphs without directions as the directions of the edges has no effect on the isomorphism type of them. However, in some applications such as the proof of Theorem~\ref{qutiont} the direction of the edges might be useful.
We remark that any (multi-)graph with $k$ edges, maximum degree at most $2$ and with no isolated vertices is a basic graph of some $k$-permutation. This is another description of the family of basic graphs.

\subsection{Cycle structure and partitions of $k$ into parts of two kinds}

To any cyclic decomposition of a permutation $\sigma$ one may assign a list of integers consisting of the lengths of the cycles appearing in the decomposition.
This list is called the {\em cycle structure} of $\sigma$. There is a one-to-one correspondence between the cycle structure of  permutations on $[n]$ and
the partitions of the integer $n$. For the case of $k$-permutations, we need to distinguish between  cycles and paths.
To this end, we take into account
 the partitions of $k$ into parts of two kinds. More precisely,  assume that there are integers of two kinds $r$ and $r'$ and we consider the ways to write $n$ as
a sum of integers of either kind where the order of terms in the
sum does not matter.  For instance, $k=2$ has the following partitions into parts of two kinds: $11$, $11'$, $1'1'$, $2$, and $2'$.
We define the {\em cycle structure} or {\em cycle type} of a $k$-permutation $\pi$ to be the list consisting of the lengths of the cycles and the paths appearing in the cyclic decomposition of $\pi$ where for cycles we write integers of the first kind and for paths we write integers of the second kind.
For example, any $3$-permutation of $[n]$ with $n\ge6$ has one of the ten different types represented in Table~\ref{part3} where
for each type, an instance of a $3$-permutation with that type together with its cyclic decomposition and the respective basic graph are demonstrated.

\begin{pro} There is a one-to-one correspondence between the cycle structure of  $k$-permutations and
the partitions of $k$ into parts of two kinds.
\end{pro}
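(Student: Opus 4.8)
The plan is to exhibit the natural map and verify it is a bijection. To the cycle type of a $k$-permutation $\pi$ I would associate the multiset of parts obtained by recording each cycle length as a part of the first kind and each path length as a part of the second kind; this is nothing but the list already attached to $\pi$ by the definition of its cycle type. There are then two things to check: that this multiset really is a partition of the integer $k$ into parts of two kinds, and that every partition of $k$ into parts of two kinds arises in this way from some $k$-permutation. Injectivity will be automatic, since two distinct cycle types are, as lists of parts of two kinds, already two distinct partitions.

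For the first point I would invoke the uniqueness of the cyclic decomposition (the previous proposition) and argue that the cycles and paths partition the domain $[k]$. Each $u\in[k]$ is the tail of exactly one arc $(u,\pi(u))$ of $\bg(\pi)$, and that arc lies in exactly one cycle or path of the decomposition; conversely a cycle $(u_1\ldots u_\ell)$ uses exactly the $\ell$ elements $u_1,\ldots,u_\ell\in[k]$, while a path $(u_1\ldots u_\ell\,v]$ uses exactly the $\ell$ elements $u_1,\ldots,u_\ell\in[k]$, its head $v$ lying outside $[k]$. Hence every element of $[k]$ belongs to exactly one cycle or path, so the cycle and path lengths sum to $k$, and recording them with the two kinds yields a partition of $k$ into parts of two kinds.

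For surjectivity I would build an explicit witness. Given a partition of $k$ with cycle-parts $a_1,\ldots,a_s$ and path-parts $b_1,\ldots,b_t$, where $\sum_i a_i+\sum_j b_j=k$, I split $[k]$ into consecutive blocks of these sizes, put a cyclic permutation on each $a_i$-block, and on each $b_j$-block run a path through the block that finally sends its last element to a point $w_j\in[n]\setminus[k]$, the $w_j$ chosen pairwise distinct. The resulting $\pi$ is injective — the cycles permute their own blocks, the path-interiors move within their blocks, and the heads $w_j$ are distinct and outside $[k]$ — and by construction its cycle type is the prescribed partition. The one place to be careful, and the only genuine obstacle, is that this construction requires $t$ distinct points outside $[k]$, i.e.\ $n-k\ge t$; since $t\le k$, the condition $n\ge 2k$ makes every partition of $k$ into parts of two kinds realizable, and one should state the correspondence in that range of $n$. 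Everything else is routine bookkeeping.
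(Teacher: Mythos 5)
Your proof is correct, and since the paper states this proposition without any proof at all (treating it as immediate from the definitions), your argument is essentially the intended one made explicit: the cycle type of $\pi$ is by definition a list of cycle lengths (first kind) and path lengths (second kind), the uniqueness of the cyclic decomposition makes the assignment well defined, the fact that each $u\in[k]$ is the tail of exactly one arc $(u,\pi(u))$ shows the parts sum to $k$ (a path of length $\ell$ occupying exactly $\ell$ elements of $[k]$, its head lying in $[n]\setminus[k]$), and the block construction supplies surjectivity.

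The one substantive point you add --- and it is a genuine one --- is the restriction needed for surjectivity: realizing a partition with $t$ primed parts requires $t$ distinct heads in $[n]\setminus[k]$, hence $n-k\ge t$, and $n\ge 2k$ guarantees every partition occurs. The paper never states this hypothesis in the proposition, but it is implicitly aware of it: it remarks that ``any $3$-permutation of $[n]$ with $n\ge 6$ has one of the ten different types,'' and in the eigenvalue computations the small cases (e.g.\ $A(4,3)$, $A(5,3)$) are handled separately by computer precisely because for small $n$ some cells of the cycle-type partition are empty and the quotient-matrix argument needs all $c(k)$ cells nonempty. So your caveat is not a flaw in your proof but a correct sharpening of the statement as written; as you note, without some bound like $n\ge 2k$ the map from cycle structures of $k$-permutations of $[n]$ into partitions of $k$ of two kinds is injective but not onto.
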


\begin{rem}\label{c(k)} We define $c(k)$ to be the number of partitions of $k$ into parts of two kinds.
The sequence consisting of  $c(k)$'s, namely the sequence
  $1, 2, 5, 10, 20, 36, 65, 110, 185, 300,\ldots$ (starting with index $k= 0$) is the sequence A000712 of
the OEIS \cite{oeis}.
 This sequence 
 has many interesting interpretations and nice properties (see \cite{ch,oeis}), among which are the following two identities:
    $$\sum_{k=0}^\infty c(k)x^k=\prod_{i=1}^\infty\frac{1}{(1-x^i)^2},$$ and  $$c(k) =\sum_{i=0}^k p(i)p(k-i),$$
where $p(i)$ is the number of partitions of $i$.
Here is yet another interpretation of A000712: $c(k)$ is equal to
 the number of non-isomorphic basic graphs of $k$-permutations, or in other words, the number of all multi-graphs with exactly $k$-edges and with vertex degrees 1 or 2.
\end{rem}

The cycle type  of a permutation of  $n$ is shown as  $1^{a_1}2^{a_2}\ldots n^{a_n}$ where the superscripts $a_i\ge0$ indicate multiplicities.
It is known that the number of all permutations of $[n]$ of cycle type $1^{a_1}2^{a_2}\ldots n^{a_n}$ is equal to
$$\frac{n!}{a_1!a_2!2^{a_2}\cdots a_n!n^{a_n}}.$$
Here we determine the number of $k$-permutations sharing the same cycle structure.
For integers $n\ge\ell\ge0$ we  use  the {\em falling factorial} notation
$$(n)_\ell:=n(n-1)\cdots(n-\ell+1),$$
with the convention that $(n)_\ell=1$ if $\ell=0$.

\begin{thm} The number of  $k$-permutations of $[n]$ of the  cycle type
\begin{equation}\label{type}
1^{a_1}2^{a_2}\ldots k^{a_k}1'^{b_1}2'^{b_2}\ldots k'^{b_k}
\end{equation}
is equal to
$$\frac{k!}{\prod_{i=1}^k (i^{a_i}a_i!b_i!)}(n-k)_s,$$
where $s=b_1+\cdots+b_k$.
\end{thm}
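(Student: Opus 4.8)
The plan is to mimic the classical cycle-type enumeration of permutations quoted just above the statement, while taking care of the two features peculiar to $k$-permutations: a path carries an endpoint (``head'') lying outside $[k]$, and a path, unlike a cycle, has no rotational symmetry.

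First I would record the bookkeeping forced by the cyclic decomposition. In a cycle $(u_1\ldots u_i)$ every entry lies in $[k]$, so an $i$-cycle consumes $i$ elements of $[k]$; in a path $(u_1\ldots u_i\,v]$ the entries $u_1,\ldots,u_i$ lie in $[k]$ while the head $v$ lies in $[n]\setminus[k]$, so an $i$-path consumes $i$ elements of $[k]$ together with one element of $[n]\setminus[k]$. Since $\pi$ is injective, distinct paths have distinct heads. Because the type \eqref{type} satisfies $\sum_{i}i(a_i+b_i)=k$, the cycles and paths together consume all of $[k]$ and exactly $s=b_1+\cdots+b_k$ of the $n-k$ available elements of $[n]\setminus[k]$.

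Next I would count the ``shapes'', by which I mean the data of the cycles together with the path-stubs $u_1\ldots u_i$ (the paths with heads not yet attached). Writing an arbitrary arrangement of $[k]$ as a word of length $k$ ($k!$ choices) and cutting it into consecutive blocks of the prescribed sizes --- $a_i$ blocks of size $i$ to be read as $i$-cycles and $b_i$ blocks of size $i$ to be read as $i$-path-stubs --- produces every shape, but with repetitions. Each $i$-cycle arises from $i$ cyclic rotations of its block, giving a factor $i^{a_i}$; the $a_i$ equal-length cycles may be permuted among themselves, giving $a_i!$; and the $b_i$ equal-length path-stubs may be permuted, giving $b_i!$. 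Crucially there is no rotational factor for paths, since a path-stub is determined by the full left-to-right order of its block. Hence the number of shapes is
$$\frac{k!}{\prod_{i=1}^k\left(i^{a_i}a_i!\,b_i!\right)}.$$

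Finally I would attach the heads. In a fixed shape the $s$ path-stubs are pairwise distinguishable (they involve different elements of $[k]$), so choosing distinct heads for them amounts to choosing an injection from the set of path-stubs into $[n]\setminus[k]$, which can be done in $(n-k)(n-k-1)\cdots(n-k-s+1)=(n-k)_s$ ways; every $k$-permutation of the given type arises exactly once in this manner. Multiplying the two counts yields the claimed formula. I expect the only delicate point to be the overcounting bookkeeping --- specifically, recognizing that cycles contribute the factor $i^{a_i}$ while paths contribute no analogous factor, and checking that all these symmetries act freely so that every orbit has the full size $\prod_{i}\left(i^{a_i}a_i!\,b_i!\right)$. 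As a sanity check, setting $k=n$ kills all paths ($s=0$ and $(n-k)_0=1$) and recovers the classical permutation formula recalled above.
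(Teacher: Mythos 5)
Your proof is correct and takes essentially the same approach as the paper: arrange $[k]$ in $k!$ ways, attach the $s$ path heads as an $s$-permutation of $[n]\setminus[k]$ in $(n-k)_s$ ways, and divide by $\prod_{i=1}^k i^{a_i}a_i!\,b_i!$, with the same key observation that cycles carry the rotational factor $i^{a_i}$ while paths do not. Your explicit verification that the symmetries act freely (so every orbit has full size) is a minor refinement the paper leaves implicit, but the argument is the same.
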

\begin{proof}{
Note that $s=|\im(\pi)\setminus[k]|$
for any $k$-permutation $\pi$ of cycle type (\ref{type}).
Any $k$-permutation with cycle type  (\ref{type}) can be obtained from an arbitrary  permutation $\sigma$ of $[k]$ by first inserting
$a_1+\cdots+a_k$ pairs of parentheses to make the cycles and
inserting $s$ pairs consisting of a parenthesis and a bracket and then putting $s$ integers from $[n]\setminus[k]$ alongside the brackets to make the paths.
We count the number of ways that this can be done.
There are $k!$ ways to fill in the permutation $\sigma$. The integers inserted alongside the $s$ brackets can be regarded as an $s$-permutation of $[n]\setminus[k]$ and can be chosen in $(n-k)_s$ ways. These give the number $k!(n-k)_s$; but
we must correct our overcounting.
 Each of the $a_i$ cycles of length $i$ can be rotated around $i$ ways and be the same cycle, so we should divide
by $i^{a_i}$ for $i = 1, \ldots , k$. (Note that this is not the case for the paths; because in any path $(u_1\ldots u_\ell\,v]$ the last element $v$ always comes from $[n]\setminus[k]$ and rotating around the $u_i$'s yields different paths.)
 There are $a_i$  cycles of length $i$ and $b_i$ paths of length $i$ which can be permuted around in $a_i!$ and $b_i!$ ways, respectively, so we divide by
$a_i!b_i!$ for $i = 1, \ldots , k$.
}\end{proof}

\section{The cycle-type partition}

We partition $V(n,k)$ according to the cycle type of $k$-permutations. So $V(n,k)$ is partitioned into $c(k)$ cells (cf. Remark~\ref{c(k)}) where the $k$-permutations of each cell/part share the same cycle type.
Equivalently, two $k$-permutations belong to the same cell if they have isomorphic basic graphs.
We call this partition the {\em cycle-type partition} of $V(n,k)$. For instance,
 the cycle-type partition of 3-permutations of $[n]$ together with the corresponding cycle type of each cell are demonstrated in Table~\ref{cell3}.
\begin{table}
  \centering
  \begin{tabular}{cl}
\hline
   Type & Cell\\
   \hline &\vspace{-.3cm}\\
$ 111$& $V_1=\{(1,2,3)\}$\vspace{.1cm}\\
$ 12$ & $V_2=\{(1,3,2),(2,1,3),(3,2,1)\}$\vspace{.1cm}\\
$3$ & $V_3=\{(2,3,1),(3,1,2)\}$\vspace{.1cm}\\
$111'$& $V_4=\{(1,2,i),(1,i,3),(i,2,3)\mid 4\le i\le n\}$\vspace{.1cm}\\
$ 21'$ &  $V_5=\{(2,1,i),(3,i,1),(i,3,2)\mid 4\le i\le n\}$\vspace{.1cm}\\
$11'1'$ & $V_6=\{(1,i,j),(i,2,j),(i,j,3)\mid 4\le i,j\le n,i\ne j\}$\vspace{.1cm}\\
$12'$& $V_7=\{(1,3,i),(3,2,i),(1,i,2),(2,i,3),(i,2,1),(i,1,3)\mid 4\le i\le n\}$\vspace{.1cm}\\
$1'1'1'$& $V_8=\{(i,j,k)\mid 4\le i,j,k\le n,i\ne j\ne k\ne i\}$\vspace{.1cm}\\
$1'2'$ & $V_9=\{(2,i,j),(3,i,j),(i,1,j),(i,3,j),(i,j,1),(i,j,2)\mid 4\le i,j\le n,i\ne j\}$\vspace{.1cm}\\
$3'$& $V_{10}=\{(2,3,i),(3,1,i),(3,i,2),(2,i,1),(i,3,1), (i,1,2)\mid 4\le i\le n\}$\\
  \hline
\end{tabular}
  \caption{ The cycle-type partition of 3-permutations}\label{cell3}
\end{table}

An {\em equitable partition} of a graph $G$ is a partition $\Pi=(V_1,\ldots,V_m)$ of the vertex set
such that each vertex in $V_i$ has the same number $q_{ij}$ of neighbors in  $V_j$
for any $i,j$ (and possibly $i=j$). The {\em quotient matrix} of $\Pi$ is the $m\times m$ matrix $Q=(q_{ij})$.
It is well-known that every eigenvalue of the quotient matrix $Q$ is an eigenvalue of $G$ (see \cite[p. 24]{bh}). Under certain circumstances, the converse of this is also true as the next lemma shows.

We recall that a graph $G$ is {\em walk-regular} if for every positive integer $r$, the number of closed walks of length $r$ starting at a vertex $v$ is independent of the choice of $v$.
Clearly, vertex-transitive graphs are walk-regular and so are the arrangement graphs $A(n,k)$.

\begin{lem}[\cite{gm}]\label{god}  Let $G$ be a walk-regular graph with $\nu$ vertices. Let $\Pi=(V_1,\ldots,V_m)$ be an equitable partition of $G$ with $|V_1|=1$ and let
$Q$ be the quotient matrix of $\Pi$.
\begin{itemize}
\item[\rm(i)] Every eigenvalue of $G$ is an eigenvalue of $Q$.
\item[\rm(ii)] Let $S=\textnormal{diag}(\sqrt{\vert V_1\vert}, \sqrt{\vert V_2\vert},\dots, \sqrt{\vert V_m\vert})$ and $P=SQS^{-1}$. If $\{\x_1, \dots, \x_\ell\}$ is a full set of orthonormal eigenvectors of $P$ for the eigenvalue $\lambda$, then the multiplicity of $\lambda$ as an eigenvalue of $G$ is
$$
\nu \sum_{i=1}^\ell (\x_i)_1^2,
$$
where $(\x_i)_1$ denotes the first coordinate of $\x_i$.
\end{itemize}
\end{lem}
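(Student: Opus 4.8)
The plan is to pass to the spectral idempotents of the adjacency matrix $A$ of $G$ and use walk-regularity to convert a statement about the global multiplicity of an eigenvalue into a statement about local data at the single vertex of $V_1$. Write $A=\sum_{\mu}\mu\,E_\mu$ for the spectral decomposition, the sum being over the distinct eigenvalues $\mu$ of $G$ with $E_\mu$ the orthogonal projection onto the $\mu$-eigenspace. First I would set up the usual equitable-partition machinery. Let $B$ be the $\nu\times m$ characteristic matrix whose $i$-th column is the indicator vector of $V_i$; equitability means precisely $AB=BQ$, so $\mathcal U:=\mathrm{col}(B)$ is $A$-invariant. Since $B^{\top}B=S^{2}=\mathrm{diag}(|V_1|,\dots,|V_m|)$, the matrix $\widetilde B:=BS^{-1}$ has orthonormal columns, and a short computation gives $P=SQS^{-1}=\widetilde B^{\top}A\widetilde B$. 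Thus $P$ is symmetric and represents the restriction $A|_{\mathcal U}$ in the orthonormal basis formed by the columns of $\widetilde B$; in particular $Q$, $P$ and $A|_{\mathcal U}$ share the same spectrum.

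The hypothesis $|V_1|=1$ enters as follows. Writing $V_1=\{v\}$, the vector $e_v$ is literally the first column of $B$, so $e_v\in\mathcal U$ and $\widetilde B^{\top}e_v=e_1$, the first standard basis vector of $\mathbb R^{m}$. This single equation is the reason only the first coordinate of the eigenvectors of $P$ survives in the final formula. The other essential input is walk-regularity: since each $(A^{r})_{vv}=\sum_\mu\mu^{r}(E_\mu)_{vv}$ is independent of $v$ and the Vandermonde system in the distinct $\mu$ is invertible, every diagonal entry $(E_\mu)_{vv}$ is independent of $v$, and summing over $v$ gives $(E_\mu)_{vv}=m_\mu/\nu$ where $m_\mu=\operatorname{tr}E_\mu$ is the multiplicity of $\mu$. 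For part (i), note that each $E_\mu$ is a polynomial in $A$, so $E_\mu e_v\in\mathcal U$; when $\mu$ is an eigenvalue of $G$ we have $\|E_\mu e_v\|^{2}=(E_\mu)_{vv}=m_\mu/\nu>0$, so $E_\mu e_v$ is a nonzero $\mu$-eigenvector of $A$ lying in $\mathcal U$, whence $\mu$ is an eigenvalue of $A|_{\mathcal U}$ and therefore of $Q$.

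For part (ii), fix the eigenvalue $\lambda$ and let $F=\sum_{i=1}^{\ell}\x_i\x_i^{\top}$ be the orthogonal projection of $\mathbb R^{m}$ onto the $\lambda$-eigenspace of $P$. I would show that $G_\lambda:=\widetilde B F\widetilde B^{\top}$ is the orthogonal projection of $\mathbb R^{\nu}$ onto $\mathcal U$ intersected with the $\lambda$-eigenspace of $A$, and that $E_\lambda e_v=G_\lambda e_v$. The latter is the crux: decomposing $e_v=\sum_{\mu}E_\mu e_v$ with each summand in $\mathcal U$, the projection $G_\lambda$ fixes the $\mu=\lambda$ term and annihilates the others because eigenspaces of $A$ belonging to distinct eigenvalues are orthogonal. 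Granting this, $E_\lambda e_v=\widetilde B F\widetilde B^{\top}e_v=\widetilde B F e_1$, and using $\widetilde B^{\top}\widetilde B=I$ together with $F^{2}=F=F^{\top}$ gives $\|E_\lambda e_v\|^{2}=\|Fe_1\|^{2}=e_1^{\top}Fe_1=\sum_{i=1}^{\ell}(\x_i)_1^{2}$. Combining with $m_\lambda=\nu(E_\lambda)_{vv}=\nu\|E_\lambda e_v\|^{2}$ yields the stated formula $m_\lambda=\nu\sum_{i=1}^{\ell}(\x_i)_1^{2}$.

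The step I expect to be the main obstacle is the identity $E_\lambda e_v=\widetilde B F\widetilde B^{\top}e_v$: proving that the ``lifted'' eigenprojection of $P$ agrees with the true eigenprojection $E_\lambda$ on the vector $e_v$ requires genuinely using both that $\mathcal U$ is $A$-invariant (so that $E_\mu e_v$ stays in $\mathcal U$) and that the eigenspaces of $A$ are mutually orthogonal, rather than the mere similarity $A|_{\mathcal U}\sim Q$. Everything else is bookkeeping, but it is worth emphasizing that the two hypotheses are indispensable: walk-regularity is what collapses $(E_\mu)_{vv}$ to the constant $m_\mu/\nu$, and $|V_1|=1$ is what pins $\widetilde B^{\top}e_v$ to $e_1$.
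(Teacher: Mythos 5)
Your proposal is correct, and it is essentially the argument of the cited source: the paper itself gives no proof of Lemma~\ref{god}, quoting it directly from Godsil and McKay \cite{gm}, whose proof likewise rests on the spectral idempotents $E_\mu$, the observation that walk-regularity forces $(E_\mu)_{vv}=m_\mu/\nu$ via the Vandermonde system, and the fact that $|V_1|=1$ makes $\widetilde B^{\top}e_v=e_1$ so that only first coordinates survive. Your identification of the crux, namely $E_\lambda e_v=\widetilde B F\widetilde B^{\top}e_v$, is sound as sketched, since $A\widetilde B=\widetilde B P$ shows $\widetilde B F\widetilde B^{\top}$ is the orthogonal projection onto $\mathcal U\cap\ker(A-\lambda I)$, and orthogonality of distinct eigenspaces kills the $\mu\ne\lambda$ terms exactly as you say.
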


In the next theorem we show that the cycle-type partition is indeed an equitable partition for the arrangement graphs.
This partition is fine enough to meet the condition of Lemma~\ref{god} as the cell of type $1^k$ contains a single $k$-permutation, namely $(1,\ldots,k)$.

\begin{thm}\label{part} The cycle-type partition of $V(n,k)$ is an equitable partition of $A(n,k)$.
\end{thm}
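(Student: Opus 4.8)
The plan is to exhibit a group of automorphisms of $A(n,k)$ that fixes every cell of the cycle-type partition setwise and acts transitively on each cell; equitability is then immediate. Let $H=\{\sigma\in S_n:\sigma([k])=[k]\}$ be the set-stabiliser of $[k]$ in $S_n$, so that $H\cong S_k\times S_{n-k}$ and each $\sigma\in H$ also fixes $[n]\setminus[k]$ setwise. For $\sigma\in H$ put $\tau=\sigma|_{[k]}$, a permutation of $[k]$, and define the conjugation action on $V(n,k)$ by $\Phi_\sigma(\pi)=\sigma\circ\pi\circ\tau^{-1}$. Since $\tau^{-1}$ maps $[k]$ to $[k]$, then $\pi$ maps $[k]$ to $[n]$, and then $\sigma$ maps $[n]$ to $[n]$, the composite is an injection $[k]\to[n]$, so $\Phi_\sigma(\pi)\in V(n,k)$; a short computation gives $\Phi_\sigma\circ\Phi_{\sigma'}=\Phi_{\sigma\sigma'}$, so $\Phi$ is an action of $H$ by bijections of $V(n,k)$.

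First I would check that each $\Phi_\sigma$ is an automorphism of $A(n,k)$. If $\pi=(u_1,\dots,u_k)$ and $\rho=(v_1,\dots,v_k)$ are adjacent, they differ in exactly one position $i_0$. For $j\in[k]$ we have $\Phi_\sigma(\pi)(j)=\sigma(\pi(\tau^{-1}(j)))$ and $\Phi_\sigma(\rho)(j)=\sigma(\rho(\tau^{-1}(j)))$, and since $\sigma$ is injective these differ exactly when $\pi(\tau^{-1}(j))\ne\rho(\tau^{-1}(j))$, that is, when $\tau^{-1}(j)=i_0$, i.e. $j=\tau(i_0)$. Hence $\Phi_\sigma(\pi)$ and $\Phi_\sigma(\rho)$ differ in the single position $\tau(i_0)$ and are adjacent; as $\Phi_\sigma$ is a bijection preserving adjacency in both directions, it is an automorphism. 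Moreover $\Phi_\sigma$ is literally conjugation, so it carries $\bg(\pi)$ to an isomorphic basic graph, and because $\sigma([k])=[k]$ the black vertices (elements of $[k]$) and the white vertices (elements of $\im(\pi)\setminus[k]$) are respected; thus $\Phi_\sigma(\pi)$ has the same cycle type as $\pi$, and every cell $V_i$ is fixed setwise.

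Next I would show that $H$ is transitive on each cell. Let $\pi,\pi'$ lie in the same cell, so they have the same cycle type; fix a bijection between the cycles (resp. paths) of $\pi$ and those of $\pi'$ matching parts of equal length. For matched cycles $(u_1\cdots u_\ell)$ and $(u_1'\cdots u_\ell')$ set $\sigma(u_t)=u_t'$, and for matched paths $(u_1\cdots u_\ell\,v]$ and $(u_1'\cdots u_\ell'\,v']$ set $\sigma(u_t)=u_t'$ together with $\sigma(v)=v'$. Since every element of $[k]$ occurs exactly once as a black vertex, this prescribes a bijection $[k]\to[k]$, and it sends $\im(\pi)\setminus[k]$ bijectively onto $\im(\pi')\setminus[k]$; extending arbitrarily to a bijection of $[n]\setminus[k]$ yields $\sigma\in H$. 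Running along each cycle and path one verifies $\sigma(\pi(u))=\pi'(\sigma(u))$ for all $u\in[k]$, i.e. $\sigma\circ\pi=\pi'\circ\tau$ on $[k]$; hence $\Phi_\sigma(\pi)=\sigma\circ\pi\circ\tau^{-1}=\pi'$, as required.

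Finally, I would fix cells $V_i,V_j$ and $\pi,\pi'\in V_i$ and choose $\sigma\in H$ with $\Phi_\sigma(\pi)=\pi'$. Being an automorphism that fixes $V_j$ setwise, $\Phi_\sigma$ restricts to a bijection from the neighbours of $\pi$ in $V_j$ onto the neighbours of $\pi'$ in $V_j$; in particular these two neighbourhoods have equal cardinality. Since this holds for all $\pi,\pi'\in V_i$ and all $j$, every vertex of $V_i$ has the same number $q_{ij}$ of neighbours in $V_j$, which is exactly the definition of an equitable partition. The step demanding the most care is the transitivity in the third paragraph: one must write the conjugating permutation down explicitly and, crucially, recognise that the correct action is conjugation rather than mere relabelling of the values in $[n]\setminus[k]$ — only the $S_k$-factor of $H$ permutes the positions, and it is precisely there, through the index change $j\mapsto\tau(i_0)$, that the automorphism property is nontrivial.
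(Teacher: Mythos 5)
Your proof is correct and follows essentially the same route as the paper: both construct a permutation $\sigma\in S_n$ stabilising $[k]$ setwise that conjugates one $k$-permutation of a cell to another (defined cycle-by-cycle and path-by-path on the matched decompositions), and then use $\zeta\mapsto\sigma\zeta\sigma^{-1}$ as a bijection between the neighbourhoods $N_{V_j}(\pi)$ and $N_{V_j}(\pi')$. Your packaging as a transitive action of the set-stabiliser $H\cong S_k\times S_{n-k}$ on each cell, with the cycle-type preservation checked via basic graphs, is just a slightly more systematic write-up of the paper's argument (and your direct definition of $\sigma$ on the path tails is a touch cleaner than the paper's product of transpositions).
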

\begin{proof}{Let $X$ and $Y$ (possibly $X=Y$) be two cells of the cycle-type partition of $V(n,k)$. Let $\pi,\rho\in X$. We show that there is a 
one-to-one correspondence between $N_Y(\pi)$ (the set of neighbors of $\pi$ in $Y$) and $N_Y(\rho)$.
Let
\begin{align*}
\pi & =\alpha_1\cdots \alpha_\ell\beta_1\cdots \beta_m,\\
\rho & =\gamma_1\cdots \gamma_\ell\delta_1\cdots \delta_m,
\end{align*}
be the cycle decompositions where $\alpha_i$ and $\gamma_i$ are cycles of the same length and $\beta_i$ and $\delta_i$ are paths of the same length. Let $\alpha_i=(a_{i1}\ldots a_{ir_i})$, $\gamma_i=(a_{i1}'\ldots a_{ir_i}')$,
$\beta_i=(b_{i1}\ldots b_{i(s_i-1)}\,b_{is_i}]$,  and $\delta_i=(b_{i1}'\ldots b_{i(s_i-1)}'\,b_{is_i}']$.
Note that $b_{is_i},b_{is_i}'\in [n]\setminus [k]$ and $a_{ij}, a_{ij}', b_{ie}, b_{ie}'\in [k]$ for all $i,j$ and $1\leq e\leq s_i-1$.

Let $\sigma_1$ be the permutation on $[k]$ that maps $a_{ij}\to a_{ij}'$ and $b_{ie}\to b_{ie}'$ for all $i,j$ and $1\leq e\leq s_i-1$ and $\sigma_2:[n]\setminus\left([k]\cup\im(\pi)\right)\to[n]\setminus\left([k]\cup\im(\rho)\right)$ be an arbitrary bijection.
Let
\begin{equation}
\sigma=\sigma_1\sigma_2\prod_{i=1}^m (b_{is_i}\ \ b_{is_i}').\notag
\end{equation}
Now, $\sigma$ is a permutation on $[n]$ and $\sigma \pi \sigma^{-1}=\rho$. 
Let $\zeta\in N_Y(\pi)$.
 Clearly, $\sigma \zeta \sigma^{-1}\in Y$.  We have $\zeta(u_0)\ne\pi(u_0)$ for exactly one $u_0\in [k]$ and $\zeta(u)=\pi(u)$ for all $u\in [k]\setminus \{u_0\}$. Now, $\sigma \pi \sigma^{-1}$ and  $\sigma \zeta \sigma^{-1}$ coincides at $u$ for all $u\in [k]\setminus \{u_0\}$ and differ at $u_0$. Hence, $\sigma \zeta \sigma^{-1}\in N_Y(\rho)$. The map
$\psi : N_Y(\pi)\to N_Y(\rho)$
defined by $\psi(\zeta)=\sigma \zeta\sigma^{-1}$ 
is a bijection, so the proof follows.
}\end{proof}

Nest, we  explicitly give the quotient matrix of the cycle-type partition of $A(n,k)$.
To state this result, we need more notations.
 We may denote the cycle
type of a $k$-permutation by the notation $[A,B]$, where $A$ and $B$ represent multisets of unprimed and primed integers, respectively. Thus,
elements of $A$ represent cycles while elements of $B$ represent paths.
 For a list $L$ and $i\in L$, we denote the resulting list of removing (one) $i$ from $L$ by $L_i$ and the resulting list of adding $r$ to $L$ by $L^r$.
We may combine these, so, e.g. $L_{i,j}^{r,t}$ is the list obtained from $L$ by removing $i,j$ and adding $r,t$.
The number of elements of $L$ counting multiplicities is denoted $|L|$.

In the next theorem, given a representative $\pi$ of the cycle type $[A,B]$, we determine the cycle types of the neighbors of $\pi$ 
which are described in the mutually exclusive subcases of the theorem. 
We also count the number of neighbors of $\pi$ of each type.

\begin{thm}\label{qutiont} Suppose that $\pi\in V(n,k)$ has the cycle type $[A,B]$.
Then the neighbors of $\pi$ are as follows.
\begin{itemize}
    \item[\rm(i)] For any $i\in A$ with multiplicity $a$,
\begin{itemize}
  \item[\rm(i.1)] $\pi$ has $ia(n-k-|B|)$ neighbors in $[A_i,B^i]$;
  \item[\rm(i.2)] for any $j\in B$ with multiplicity $b$, $\pi$ has $abi$ neighbors in $[A_i,B_j^{i+j}]$.
\end{itemize}
  \item[\rm(ii)] For any $j\in B$ with multiplicity $b$ and for any $\ell$ with $1\le\ell\le j$,
  \begin{itemize}
    \item[\rm(ii.1)] $\pi$ has $b$ neighbors in $[A^\ell,B_j^{j-\ell}]$;
    \item[\rm(ii.2)]  for any $m\in B_j$ with multiplicity $c$ and $m+\ell\ne j$, in $[A,B_{j,m}^{m+\ell,j-\ell}]$, $\pi$ has $2bc$ neighbors if
$m\ne j$ and $m-j+\ell\ge1$ and has $bc$ neighbors otherwise.
    \item[\rm(ii.3)] in $[A,B_j^{\ell,j-\ell}]$, $\pi$ has $2b(n-k-|B|)$ neighbors if $j\ne2\ell$ and  $b(n-k-|B|)$ neighbors if $j=2\ell$.
\end{itemize}
\item[\rm(iii)] If $B=j_1^{b_1}\ldots j_h^{b_h}$, then $\pi$ has
  $|B|(n-k-|B|)+\sum_{1\leqslant r<t\leqslant h}b_rb_t$  neighbors in $[A,B]$ (in particular, if $B=\emptyset$, then $\pi$ has no neighbor in $[A,B]$).
\end{itemize}
\end{thm}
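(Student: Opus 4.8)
The plan is to set up an explicit bijection between the neighbors of $\pi$ and certain pairs, and then to read off the effect of each neighbor on the cyclic decomposition directly from the basic graph $\bg(\pi)$. A $k$-permutation $\zeta$ is adjacent to $\pi$ precisely when $\zeta(u_0)=w\neq\pi(u_0)$ for a single $u_0\in[k]$ and $\zeta(u)=\pi(u)$ for $u\neq u_0$; injectivity of $\zeta$ forces $w\notin\im(\pi)\setminus\{\pi(u_0)\}$, and hence $w\in[n]\setminus\im(\pi)$. Thus the neighbors of $\pi$ correspond bijectively to pairs $(u_0,w)$ with $u_0\in[k]$ and $w\in[n]\setminus\im(\pi)$, and in particular $\pi$ has $k(n-k)$ neighbors. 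I would first record that $[n]\setminus\im(\pi)$ splits into the $n-k-|B|$ \emph{fresh} vertices lying in $([n]\setminus[k])\setminus\im(\pi)$ and the $|B|$ vertices of $[k]\setminus\im(\pi)$, which are exactly the starting vertices of the $|B|$ paths of $\pi$.

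Reading $\zeta$ off $\bg(\pi)$, forming $\zeta$ amounts to redirecting the single (directed) arc leaving $u_0$ so that it now points at $w$, everything else being unchanged; the rest of the argument is a case analysis on the location of $u_0$ and the nature of $w$. If $u_0$ lies on a cycle of length $i$, redirecting its arc breaks the cycle open into a directed path: when $w$ is fresh this yields one path of length $i$ (case (i.1)), and when $w$ is the start of a path of length $j$ the cycle and that path fuse into a single path of length $i+j$ (case (i.2)); counting the admissible pairs — one of the $a$ cycles of length $i$, one of its $i$ arcs, and a target $w$ — gives the stated $ia(n-k-|B|)$ and $abi$. If instead $u_0=c_\ell$ is the $\ell$-th vertex of a path of length $j$, cutting its arc detaches a suffix that is a path of length $j-\ell$ (empty when $\ell=j$), while the prefix $c_1\dots c_\ell$ has its end sent to $w$: a fresh $w$ with $\ell<j$ splits the path into lengths $\ell$ and $j-\ell$ (case (ii.3)); sending the end back to the path's own start $c_1$ closes the prefix into a cycle of length $\ell$, contributing one neighbor per path and hence $b$ in total (case (ii.1)); and sending it to the start of a second path of length $m$ produces a path of length $\ell+m$ together with the suffix (case (ii.2)).

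The genuinely delicate point, and where I expect the real work to lie, is the \emph{multiplicity} bookkeeping: a single target cell can be reached by two distinct pairs $(u_0,w)$, which is the origin of the factors of $2$. For (ii.3) the cell $[A,B_j^{\ell,\,j-\ell}]$ is produced both by cutting at position $\ell$ and by cutting at position $j-\ell$, giving $2b(n-k-|B|)$ unless these coincide, i.e. $j=2\ell$, in which case the factor drops to $b$. For (ii.2) the cell $[A,B_{j,m}^{m+\ell,\,j-\ell}]$ can arise either by cutting the length-$j$ path at position $\ell$ and grafting onto the length-$m$ path, or by cutting the length-$m$ path at position $m-j+\ell$ and grafting onto the length-$j$ path; the latter is legitimate exactly when $m\neq j$ and $m-j+\ell\ge 1$, yielding $2bc$, and $bc$ otherwise (the case $m=j$ being absorbed by the convention that $c$ is the multiplicity of $m$ in $B_j$). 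The main care here is to confirm that these are the \emph{only} coincidences, so that nothing is double counted.

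Finally I would isolate the type-preserving moves to obtain (iii). These are of exactly two kinds: redirecting the end-arc of any of the $|B|$ paths to a fresh vertex, which merely relabels the endpoint and keeps the type $[A,B]$, contributing $|B|(n-k-|B|)$; and the borderline instance of (ii.2) in which the new lengths $\{m+\ell,\,j-\ell\}$ coincide as a multiset with $\{j,m\}$, forced by $\ell=j-m$ and hence requiring $j\neq m$, which redistributes two paths of distinct lengths without altering the type and contributes one neighbor for each unordered pair of paths of differing lengths, i.e. $\sum_{r<t}b_rb_t$. To finish I would check that cases (i)–(iii) are mutually exclusive and exhaustive by verifying that every pair $(u_0,w)$ has been accounted for exactly once, and as a sanity check that the tallies sum to $k(n-k)$.
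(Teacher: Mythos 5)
Your proposal is correct and takes essentially the same route as the paper's own proof: you realize each neighbor as the redirection of a single arc of the basic graph $\bg(\pi)$, split cases by whether that arc lies on a cycle or on a path and whether the new head is fresh, the start of the same path, or the start of another path, and explain the factors of $2$ in (ii.2) and (ii.3) by the same two coincidences (cutting the $m$-path at position $m-j+\ell$, respectively interchanging $\ell$ and $j-\ell$), assembling (iii) from the same two type-preserving moves with count $|B|(n-k-|B|)+\sum_{r<t}b_rb_t$. The only differences are presentational: you make the correspondence between neighbors and pairs $(u_0,w)$ with $w\in[n]\setminus\im(\pi)$ explicit and propose the $k(n-k)$ regularity tally as a sanity check, both of which the paper leaves implicit.
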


\begin{proof}{ Let $G=\bg(\pi)$.
The vertex $\pi'\in V(n,k)$ is adjacent to $\pi$ if there is a unique $u\in[k]$ such that $\pi(u)\ne\pi'(u)$.
Considering the basic graphs, that means one can obtain $\bg(\pi')$ from $G$ by changing the arc $(u,\pi(u))$ to $(u,\pi'(u))$.
It follows that we can identify all the neighbors of $\pi$ in $A(n,k)$ by determining all basic graphs obtained from $G$ by changing an arc $(u,v)$ to $(u,w)$. Notice that we are allowed to do this only by changing the {\em head} of the arc and not the {\em tail} of the arc. Also as the resulting graph must be a basic graph, we have necessarily either $w\in[k]$ and it is a degree 1 vertex of $G$ or  $w\in[n]\setminus V(G)$.

We describe all the basic graphs obtained from $G$ through the above procedure in what follows.

(i) Let $C$ be a cycle of length $i$ in $G$. 
The only way to obtain a basic graph from $G$ by changing an arc $(u,v)$ of $C$ to $(u,w)$
is if either (a) $w\in[n]\setminus V(G)$  or (b) $w\in[k]$ and $w$ is the tail of a path of length $j$, say.
For the case (a), the resulting graph is isomorphic to the one obtained from $G$ by replacing $C$ with a path of length $i$, and thus it is isomorphic to the basic graph $H$ of $[A_i,B^i]$.
We have $(n-k-|B|)$ different choices for $w$ and also
 we may choose any of $ai$ edges of the $i$-cycles to obtain a basic graph isomorphic to $H$. Thus $\pi$ has  $ai(n-k-|B|)$ neighbors in $[A_i,B^i]$, proving (i.1). For the case (b),
the resulting graph is isomorphic to the basic graph $H$ of $[A_i,B_j^{i+j}]$. If there exist $b$ paths of length $j$ in $G$, then we have $abi$ different ways to obtain a basic graph isomorphic to $H$. This proves (i.2).

(ii) Let $(u_1\ldots u_j\,u_{j+1}]$ be a path of length $j$ in $G$.  
Consider the basic graph $H$ obtained from $G$
by changing the arc $(u_\ell,u_{\ell+1})$, $1\le\ell\le j$, to $(u,w)$. There are three possibilities, namely:
(a) $w=u_1$, (b) $w\in[k]\setminus\{u_1\}$, or (c) $w\in[n]\setminus V(G)$.
 In the case (a), $H$ is isomorphic to the basic graph of $[A^\ell,B_j^{j-\ell}]$. From any path of length $j$ we may obtain exactly one graph isomorphic to $H$. It follows that $\pi$ has $b$ neighbors in $[A^\ell,B_j^{j-\ell}]$, proving (ii.1).
In the case (b),  $w$ is necessarily the tail of a path of length $m$, say.
Thus $H$ is isomorphic to the basic graph of $[A,B_{j,m}^{m+\ell,j-\ell}]$. This gives $bc$ neighbors for $\pi$ in $[A,B_{j,m}^{m+\ell,j-\ell}]$. 
Note that if $m\ne j$ and $r:=m-j+\ell\ge1$,  then we may obtain more neighbors for  $\pi$ in $[A,B_{j,m}^{m+\ell,j-\ell}]$. 
As $m\ne j$ and $m\in B_j$ we have $j\in B_m$. With the same reasoning as above, $\pi$ has 
$bc$ neighbors in $[A,B_{m,j}^{j+r,m-r}]$. However, $B_{m,j}^{j+r,m-r}=B_{j,m}^{m+\ell,j-\ell}$.  
Therefore, if $m\ne j$ and $m-j+\ell\ge1$, the total number of neighbors of $\pi$ in $[A,B_{j,m}^{m+\ell,j-\ell}]$ is $2bc$.
This proves (ii.2).
In the case (c), we have $H$ isomorphic to the basic graph of $[A,B_j^{\ell,j-\ell}]$. 
We have $b$ paths of length $j$ and also $(n-k-|B|)$ different choices for $w$. This gives $b(n-k-|B|)$ neighbors for $\pi$ in $[A,B_j^{\ell,j-\ell}]$. But, if $j\ne2\ell$, then by interchanging  $\ell$ and $j-\ell$ we obtain another $b(n-k-|B|)$ copies of $H$ from $G$.  This proves (ii.3).

(iii) There are only two ways to change an arc $(u,v)$ to $(u,w)$ in $G$ and obtain a basic graph isomorphic to $G$, namely
(a)  by choosing $v$ to be the head of some path in $G$
and $w\in[n]\setminus V(G)$, (b) by the same way as in (ii.2) but with $m+\ell=j$. For (a), $v$ can be any of the heads of the $|B|$ paths of $G$ and $w$ any of the $(n-k-|B|)$ elements of $[n]$ not appearing in $G$. This gives $|B|(n-k-|B|)$ different copies of $G$. Now we count the rest of the neighbors of $\pi$ coming from (b).
For any $m,j\in B$ with $m<j$, letting $\ell=j-m$, we have $B_{j,m}^{m+\ell,j-\ell}=B$.
There are $\sum_{1\leqslant r<t\leqslant h}b_rb_t$ different ways to choose pairs of $m,j\in B$ with $m<j$.
The proof now follows by summing up the number of neighbors given in the cases (a) and (b).
}\end{proof}

\section{Eigenvalues of $A(n,k)$}

 In this section we determine the eigenvalues of $A(n,k)$ for $k\le7$.
As mentioned before, the arrangement graphs are vertex-transitive and thus walk-regular.
Furthermore, the cycle-type partition of $A(n,k)$ contains a cell of cardinality $1$.
So Lemma~\ref{god} can be applied to derive the eigenvalues of of $A(n,k)$.
For the graphs $A(n,2)$, though, we employ a different method taking into account a characterization which directly gives the eigenvalues.

Since $A(n,k)$ is a $k(n-k)$-regular graph, the largest eigenvalue is $k(n-k)$. When $n=k$, the edge set is an empty set. So,  $A(k,k)$ has one eigenvalue only, which is 0.
So we shall assume that $n>k$.
 When $k=1$, $A(n,1)$ is the complete graph with $n$ vertices. Therefore the eigenvalues of $A(n,1)$ are $(n-1)$ with multiplicity 1, and $-1$ with multiplicity $n-1$.

\subsection{Eigenvalues of $A(n,2)$}

 We use  the following well-known lemma  (see \cite[p. 10]{bh}) to derive the eigenvalues of $A(n,2)$.

\begin{lem}\label{line} If $G$ is an $r$-regular graph $(r\ge 2)$ with $\nu$ vertices and $\varepsilon$ edges,
and eigenvalues $\la_i,$ $i=1,\ldots,\nu$, then the line graph of $G$ is $(2r-2)$-regular with eigenvalues
$\la_i+r-2$, $i=1,\ldots,\nu$ together with $\varepsilon-\nu$ times $-2$.
\end{lem}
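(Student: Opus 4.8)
The plan is to prove this classical fact through the incidence matrix of $G$ together with the standard relation between the nonzero eigenvalues of $BB^{T}$ and $B^{T}B$. Let $A$ be the adjacency matrix of $G$, let $A_L$ be the adjacency matrix of the line graph $L(G)$, and let $B$ be the (vertex--edge) incidence matrix of $G$, i.e.\ the $\nu\times\varepsilon$ matrix whose $(v,e)$-entry is $1$ if $v$ is an endpoint of $e$ and $0$ otherwise. First I would dispose of the regularity claim: an edge $e=\{u,v\}$ of $G$ is adjacent in $L(G)$ to exactly the other edges at $u$ and the other edges at $v$, giving $(r-1)+(r-1)=2r-2$ neighbours, so $L(G)$ is $(2r-2)$-regular.

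The core of the argument is to establish the two matrix identities $BB^{T}=A+rI_\nu$ and $B^{T}B=A_L+2I_\varepsilon$. For the first, the $(u,v)$-entry of $BB^{T}$ counts the edges incident with both $u$ and $v$: this is $r$ on the diagonal (since $G$ is $r$-regular) and, for $u\ne v$, equals the number of $u$--$v$ edges, which is the $(u,v)$-entry of $A$ because $G$ is simple. For the second, the $(e,f)$-entry of $B^{T}B$ counts the vertices incident with both $e$ and $f$: this is $2$ on the diagonal and, for $e\ne f$, is $1$ exactly when $e$ and $f$ share a vertex, i.e.\ when they are adjacent in $L(G)$, and $0$ otherwise.

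Next I would invoke the standard determinantal identity $x^{\nu}\det(xI_\varepsilon-B^{T}B)=x^{\varepsilon}\det(xI_\nu-BB^{T})$, valid for any $\nu\times\varepsilon$ matrix $B$. Since the eigenvalues of $BB^{T}=A+rI_\nu$ are $\la_i+r$ for $i=1,\dots,\nu$, this identity forces the characteristic polynomial of $B^{T}B$ to be $x^{\varepsilon-\nu}\prod_{i=1}^{\nu}\bigl(x-(\la_i+r)\bigr)$; hence the eigenvalues of $B^{T}B$ are $\la_i+r$ $(i=1,\dots,\nu)$ together with $0$ of multiplicity $\varepsilon-\nu$. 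Here $r\ge2$ guarantees $\varepsilon=r\nu/2\ge\nu$, so the exponent $\varepsilon-\nu$ is a genuine nonnegative multiplicity. Finally, subtracting $2$ from each eigenvalue (because $A_L=B^{T}B-2I_\varepsilon$) yields the eigenvalues $\la_i+r-2$ of $L(G)$ together with $-2$ occurring $\varepsilon-\nu$ times, as claimed.

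I do not anticipate a serious obstacle, as the result is standard; the only point demanding a little care is the bookkeeping of zero eigenvalues when passing between $BB^{T}$ and $B^{T}B$. Using the characteristic-polynomial identity rather than the looser statement ``same nonzero eigenvalues'' sidesteps this entirely, since it tracks all multiplicities (including zeros) automatically and makes clear that it is the larger matrix $B^{T}B$ that acquires the extra $\varepsilon-\nu$ zeros.
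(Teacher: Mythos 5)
Your proof is correct and complete: the identities $BB^{T}=A+rI_\nu$ and $B^{T}B=A_L+2I_\varepsilon$, combined with the characteristic-polynomial identity $x^{\nu}\det(xI_\varepsilon-B^{T}B)=x^{\varepsilon}\det(xI_\nu-BB^{T})$, give exactly the claimed spectrum, and your use of that identity properly accounts for the zero eigenvalues (including the bipartite case $\la_i=-r$). The paper does not prove this lemma itself but cites it as well known from Brouwer and Haemers, and the standard proof given there is precisely this incidence-matrix argument, so your approach coincides with the intended one.
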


\begin{pro} Let $n\geq 3$. The eigenvalues of the arrangement graph $A(n,2)$ are
$$(-2)^{[n^2-3n+1]},~ (n-4)^{[n-1]},~ (n-2)^{[n-1]},~ (2n-4)^{[1]}, $$
where the superscripts indicate multiplicities.
\end{pro}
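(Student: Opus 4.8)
The plan is to identify $A(n,2)$ as the line graph of the complete graph $K_n$ (equivalently, the triangular graph / Johnson graph $J(n,2)$'s relative), and then apply Lemma~\ref{line} to transfer the known spectrum of $K_n$ to its line graph. The key observation is that a $2$-permutation of $[n]$ is an ordered pair $(u_1,u_2)$ with $u_1\ne u_2$, and two such pairs are adjacent in $A(n,2)$ exactly when they agree in one coordinate and differ in the other. I would first argue that this adjacency structure is precisely the line graph of some natural graph. But since each vertex is an \emph{ordered} pair, $A(n,2)$ is not quite $L(K_n)$; rather, each edge $\{u_1,u_2\}$ of $K_n$ gives rise to two vertices $(u_1,u_2)$ and $(u_2,u_1)$ of $A(n,2)$, so $A(n,2)$ has $n(n-1)$ vertices while $L(K_n)$ has $\binom{n}{2}$. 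The cleanest route is therefore to realize $A(n,2)$ as the line graph of the directed/doubled structure, or more directly as the line graph of $K_n$ with each vertex blown up.

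Let me reconsider the adjacency more carefully before committing. Two pairs $(u_1,u_2)$ and $(v_1,v_2)$ are adjacent iff exactly one coordinate changes. So $(u_1,u_2)\sim(u_1,v_2)$ for $v_2\ne u_2,u_1$ (changing the second coordinate), and $(u_1,u_2)\sim(v_1,u_2)$ for $v_1\ne u_1,u_2$ (changing the first). This means $A(n,2)$ is the line graph of the complete bipartite-like incidence, and in fact the right identification is that $A(n,2)=L(K_n)$ fails but $A(n,2)$ is the \emph{tensor/lexicographic} adjustment of it. Rather than fight this, I would verify directly that $A(n,2)\cong L(H)$ where $H$ is the graph on vertex set $[n]$ in which we attach, for the two coordinate roles, a suitable multigraph; concretely $H$ should be the complete graph $K_n$ with a doubling so that $L(H)$ has $n(n-1)$ vertices and is $(2n-4)$-regular, matching the degree $k(n-k)=2(n-2)$ of $A(n,2)$. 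I expect $H$ to be $2K_n$ or the appropriate bidirected complete graph, and I would pin down $H$ by matching vertex count, edge count, and regularity.

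Once $H$ is identified, the mechanics are routine: compute the spectrum of $H$, feed it into Lemma~\ref{line}, and collect multiplicities. If $H$ is an $r$-regular graph on $\nu$ vertices with $\varepsilon$ edges, then $L(H)$ has $\varepsilon$ vertices and is $(2r-2)$-regular with eigenvalues $\lambda_i+r-2$ (from the $\nu$ eigenvalues of $H$) together with $-2$ repeated $\varepsilon-\nu$ times. I would check that $\varepsilon=n(n-1)$ (the number of vertices of $A(n,2)$) and that $2r-2=2(n-2)$, forcing $r=n-1$; this is consistent with $H$ being built from $K_n$ (whose vertices have degree $n-1$). The eigenvalues of the relevant $H$ are easy closed forms (shifts and scalings of the $K_n$ eigenvalues $n-1$ and $-1$), and substituting them yields the claimed values $2n-4$, $n-2$, $n-4$, and $-2$, with the $-2$ eigenvalue picking up both the line-graph default multiplicity $\varepsilon-\nu$ and any contributions from $H$-eigenvalues equal to $0$.

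The main obstacle will be the bookkeeping around ordered versus unordered pairs: getting the correct graph $H$ whose line graph is $A(n,2)$ rather than the naive $L(K_n)$, and then correctly tallying the multiplicities so they sum to $n(n-1)$ and match the stated $(-2)^{[n^2-3n+1]}$, $(n-4)^{[n-1]}$, $(n-2)^{[n-1]}$, $(2n-4)^{[1]}$. In particular I would verify the total multiplicity $(n^2-3n+1)+(n-1)+(n-1)+1=n^2-n=n(n-1)$ as a consistency check, and trace exactly which eigenvalues of $H$ produce the two multiplicity-$(n-1)$ eigenvalues $n-2$ and $n-4$ of $A(n,2)$. The authors' hint that they use ``a characterization which directly gives the eigenvalues'' suggests this line-graph identification is indeed the intended route, so once $H$ is correctly described the rest is a direct application of Lemma~\ref{line}.
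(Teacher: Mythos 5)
Your overall strategy---realize $A(n,2)$ as a line graph and feed the spectrum of the base graph into Lemma~\ref{line}---is exactly the paper's route, but your proposal stalls at the one step that carries all the content: identifying the base graph $H$. Your plan to ``pin down $H$ by matching vertex count, edge count, and regularity'' cannot succeed, because those three invariants do not determine $H$: both of your candidates, $2K_n$ (two disjoint copies of $K_n$) and the graph $H_n$ obtained from $K_{n,n}$ by deleting a perfect matching, have $2n$ vertices, $n(n-1)$ edges, and are $(n-1)$-regular. Worse, your front-runner $2K_n$ is demonstrably wrong: $L(2K_n)$ is disconnected, whereas $A(n,2)$ is connected for $n\ge 3$; spectrally, Lemma~\ref{line} applied to $2K_n$ gives $2n-4$ with multiplicity $2$, $n-4$ with multiplicity $2n-2$, and $-2$ with multiplicity $n^2-3n$, so the eigenvalue $n-2$ never appears. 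The correct choice, which is what the paper uses, is $H_n$ with parts $\{a_1,\ldots,a_n\}$ and $\{b_1,\ldots,b_n\}$ and edges $a_ib_j$ for $i\ne j$: the map $(i,j)\mapsto a_ib_j$ is an isomorphism $A(n,2)\cong L(H_n)$, and this follows immediately from the adjacency analysis you yourself carried out (two pairs are adjacent iff they share the first or the second coordinate, i.e., iff the corresponding edges share $a_i$ or share $b_j$). Your ``bidirected complete graph'' instinct gestures at this bipartite double, but as written it is never pinned down, and the numerical verification you propose would ``confirm'' $2K_n$ just as readily.

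There is also a bookkeeping error that would bite once $H$ is fixed: you assert that the eigenvalue $-2$ of the line graph picks up contributions from $H$-eigenvalues equal to $0$, but Lemma~\ref{line} sends $\lambda$ to $\lambda+r-2$, so extra $-2$'s come from $\lambda=-r=-(n-1)$, while an eigenvalue $0$ of $H$ would produce $n-3$, which is not in the claimed spectrum at all. Since $H_n$ has adjacency matrix $(J_2-I_2)\otimes(J_n-I_n)$, its spectrum is $\pm(n-1)$ once each and $\pm1$ with multiplicity $n-1$ each; being connected and bipartite it attains $-(n-1)$ exactly once, contributing the single extra $-2$ that yields $n^2-3n+1$ rather than $n^2-3n$. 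Note that this one extra $-2$ is precisely what distinguishes the correct $H_n$ (bipartite) from $2K_n$ (not bipartite for $n\ge 3$), so your identification gap and your multiplicity slip are two faces of the same missing idea.
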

\begin{proof}{Let $H_n$ denote the graph obtained from the complete bipartite graph $K_{n,n}$ after removing a perfect matching.
Assume that $\{a_1,\ldots,a_n,b_1,\ldots,b_n\}$ is the vertex set and $\{a_ib_j\mid1\le i,j\le n,i\ne j\}$ is the edge set of $H_n$.
In the line graph $L(H_n)$ of $H_n$, two distinct edges $a_ib_j$ and $a_rb_s$ are adjacent if and only if either $i=r$ or $s=j$.
This shows that the map which sends the vertex $(i,j)$ of $A(n,2)$ to the vertex $a_ib_j$ of $L(H_n)$ defines an isomorphism between $A(n,2)$ and $L(H_n)$.

Now, it suffices to determine the eigenvalues of $L(H_n)$. Since the adjacency matrix of $H_n$ is $(J_2-I_2)\otimes(J_n-I_n)$, with  `$\otimes$' denoting the Kronecker product, the eigenvalues of $H_n$ are $\pm(n-1)$ with multiplicity $1$ and $\pm1$ with multiplicity $n-1$. The result now follows by applying Lemma~\ref{line}.
}\end{proof}

\subsection{Eigenvalues of $A(n,3)$ and $A(n,4)$}

Throughout this subsection, we use the notation $n_i$ for $(n-i)$ for saving space in tables and arrays. 

\begin{thm}
For $n\ge4$, the eigenvalues of $A(n,3)$ are
\begin{align*}&(-3)^{[n(n-2)(n-4)-1]},~ (n-7)^{[n(n-3)/2]},~ (n-6)^{[(n-2)(n-1)]},~ (n-4)^{[n(n-3)]},~ \\ &(n-3)^{[(n-1)(n-2)/2]},~ (2n-9)^{[n-1]},~ (2n-6)^{[2(n-1)]},~ (3n-9)^{\,[1]}.
\end{align*}
\end{thm}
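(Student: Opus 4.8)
The plan is to apply Lemma~\ref{god} to the cycle-type partition of $A(n,3)$, exactly as the paper sets up in the preceding discussion. Since $A(n,3)$ is vertex-transitive and hence walk-regular, and since the cycle-type partition has the singleton cell $V_1=\{(1,2,3)\}$ of type $1^3$, all hypotheses of the lemma are met. By part~(i) of the lemma, every eigenvalue of $A(n,3)$ is an eigenvalue of the $10\times10$ quotient matrix $Q$ (recall $c(3)=10$; see the ten cells $V_1,\dots,V_{10}$ in Table~\ref{cell3}), and by part~(ii) the multiplicity of each eigenvalue in $A(n,3)$ is recovered from the first-coordinate weights of an orthonormal eigenbasis of the symmetrized matrix $P=SQS^{-1}$. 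So the whole result is, in principle, a finite computation with a $10\times10$ matrix whose entries are polynomials in $n$.

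First I would write down $Q$ explicitly. The row of $Q$ indexed by a cell of type $[A,B]$ is obtained by reading off the neighbor counts from Theorem~\ref{qutiont}: for each representative $\pi$ of type $[A,B]$, cases~(i)--(iii) give precisely the number of neighbors landing in each target cell $[A_i,B^i]$, $[A_i,B_j^{i+j}]$, $[A^\ell,B_j^{j-\ell}]$, and so on, as polynomials in $n$. For $k=3$ the relevant types are $111,\,12,\,3,\,111',\,21',\,11'1',\,12',\,1'1'1',\,1'2',\,3'$, and each application of Theorem~\ref{qutiont} is mechanical. I would record the cell sizes from the counting Theorem (the formula $\tfrac{k!}{\prod_i(i^{a_i}a_i!b_i!)}(n-k)_s$), since these sizes are exactly the diagonal entries of $S^2$ needed in part~(ii). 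As a sanity check, each row of $Q$ must sum to the degree $3(n-3)=3n-9$, and this must reproduce the top eigenvalue $3n-9$ with multiplicity $1$.

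Next I would diagonalize $Q$ (equivalently the symmetric $P$). Because $Q$ has polynomial entries in $n$, its characteristic polynomial factors into polynomials in $\lambda$ and $n$, and the claimed eigenvalues $-3,\,n-7,\,n-6,\,n-4,\,n-3,\,2n-9,\,2n-6,\,3n-9$ should emerge as its roots. For each eigenvalue $\lambda$ I would compute an orthonormal basis $\{\x_1,\dots,\x_\ell\}$ of the $\lambda$-eigenspace of $P$ and then evaluate $\nu\sum_{i=1}^\ell(\x_i)_1^2$, where $\nu=|V(n,3)|=n(n-1)(n-2)=(n)_3$; this yields the multiplicities listed. A convenient bookkeeping device is that the multiplicities must sum to $\nu$, which for $A(n,3)$ gives the identity
\begin{equation}
n(n-2)(n-4)-1+\tfrac{n(n-3)}{2}+(n-2)(n-1)+n(n-3)+\tfrac{(n-1)(n-2)}{2}+(n-1)+2(n-1)+1=(n)_3,\notag
\end{equation}
and verifying this provides a strong check on the computation. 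One subtlety is that some of the stated eigenvalues may coincide for particular small $n$ (for instance $n-7$, $n-6$, $n-4$, $n-3$, $2n-9$, $2n-6$, $3n-9$ collide in various ways when $n$ is small), so I would treat the generic $n$ and then note that the multiplicity formula still returns the correct total even across such coincidences.

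The main obstacle will be the eigenvalue $-3$ and its large multiplicity $n(n-2)(n-4)-1$. This multiplicity is the dominant one, of order $n^3$, and it is the value that part~(ii) of Lemma~\ref{god} forces to absorb almost all of the $\nu$ vertices; getting its weight $\nu\sum_i(\x_i)_1^2$ exactly right requires care because $-3=-k$ is precisely the smallest eigenvalue whose asymptotic multiplicity $\mathcal{O}(n^3)$ is the subject of the paper's final general claim. Concretely, the eigenvectors of $P$ associated with the eigenvalues whose first coordinate vanishes contribute nothing, so I expect most of the computational effort to lie in identifying which eigenvectors have nonzero first coordinate and in simplifying the resulting sums of squares into the clean polynomial $n(n-2)(n-4)-1$. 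Once $Q$ is in hand, however, everything reduces to linear algebra over $\mathbb{Q}(n)$, so the proof is a finite, if tedious, verification rather than a conceptual difficulty.
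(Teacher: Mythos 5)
Your overall strategy is exactly the paper's: apply Lemma~\ref{god} to the cycle-type partition (walk-regular graph, singleton cell of type $111$), build the $10\times10$ quotient matrix $Q$ from Theorem~\ref{qutiont}, diagonalize $P=SQS^{-1}$ with $S$ built from the cell sizes, and extract multiplicities from first coordinates. Two points of comparison, one of which is a genuine gap. The gap: you claim the generic computation covers all $n\ge4$, and you diagnose the small-$n$ subtlety as mere \emph{coincidences among eigenvalues}. That is the wrong diagnosis. For $n=4$ and $n=5$ the cycle-type partition does not have ten nonempty cells --- e.g.\ the cell of type $1'1'1'$ requires three distinct elements of $[n]\setminus[3]$ and is empty unless $n\ge6$, and for $n=4$ the cells of types $11'1'$ and $1'2'$ are empty as well. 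Consequently the generic $Q$ is not the quotient matrix of any equitable partition of $A(4,3)$ or $A(5,3)$ (some of its entries, such as $3(n-6)$, are even negative there), and Lemma~\ref{god} cannot be invoked with it. Your fallback --- ``the multiplicity formula still returns the correct total across coincidences'' --- does not repair this, and indeed for $n=4$ the stated multiplicity $n(n-2)(n-4)-1$ of $-3$ is negative, so the generic formulas cannot literally hold cell-by-cell; only after merging (here, summing the multiplicities of $-3$ and $n-7$) do they match. The paper patches exactly this by computing the spectra of $A(4,3)$ and $A(5,3)$ directly by computer and verifying agreement with the stated formulas, then running the quotient-matrix argument only for $n\ge6$; your proof needs the same (or an equivalent) separate treatment of $n=4,5$.

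The second point is not a gap but a difference in bookkeeping for the eigenvalue $-3$. You propose to compute its multiplicity head-on via an orthonormal basis of its (three-dimensional) eigenspace of $P$, and you correctly flag this as the main computational burden. The paper avoids it: since part (i) of Lemma~\ref{god} guarantees that \emph{every} eigenvalue of $A(n,3)$ appears among the eigenvalues of $Q$, the multiplicities must sum to $\nu=n(n-1)(n-2)$, so the multiplicity of $-3$ is obtained by subtracting the (easily computed, each simple for $Q$) multiplicities of the other seven eigenvalues from $\nu$. The identity you list as a ``sanity check'' is thus, in the paper, the actual derivation of the dominant multiplicity; adopting it as such would spare you the orthogonalization entirely. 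With the $n=4,5$ cases handled separately and this simplification noted, your argument coincides with the paper's proof.
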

\begin{proof}{The eigenvalues of $A(4,3)$ and $A(5,3)$ are determined by a computer; these are
$$\{-3^{[1]},\, -2^{[6]},\, -1^{[3]},\, 0^{[4]},\, 1^{[3]},\, 2^{[6]},\, 3^{[1]}\}~~\hbox{and}~~
\{-3^{[14]},\, -2^{[5]},\, -1^{[12]},\, 1^{[14]},\, 2^{[6]},\, 4^{[8]},\, 6^{[1]}\},$$
respectively, which agree with the assertion. (Note that letting $n=4$ in the assertion, the sum of the multiplicities of $-3$ and $n-7$ equals $1$.) Now, we may assume that $n\ge6$.

By Theorem~\ref{qutiont}, the quotient matrix of the cycle-type partition of $A(n,3)$ is the following where the cells are indexed as in Table~\ref{cell3}:
$$Q={\small\left(
  \begin{array}{cccccccccc}
0& 0& 0& 3n_3& 0& 0& 0& 0& 0& 0\\ 0& 0& 0& 0& n_3& 0& 2n_3& 0& 0& 0\\ 0& 0& 0& 0& 0& 0& 0& 0& 0& 3n_3\\ 1& 0& 0& n_4& 0& 2n_4& 2& 0& 0& 0\\ 0& 1& 0& 0& n_4& 0& 0& 0& 2n_4& 2\\ 0& 0& 0& 2& 0& 2n_5& 2& n_5& 2& 0\\ 0& 1& 0& 1& 0& n_4& n_4& 0& n_4& 1\\ 0& 0& 0& 0& 0& 3& 0& 3n_6& 6& 0\\ 0& 0& 0& 0& 1& 1& 1& n_5& 2n-9& 2\\ 0& 0& 1& 0& 1& 0& 1& 0& 2n_4& n_4\\
\end{array}
\right).}$$
By computation, we worked out the eigenvalues and eigenvectors of $Q$ as given in Table~\ref{eig3}.

\begin{table}[h]
  \centering
\begin{tabular}{cc}
\hline
   Eigenvalue & Eigenvector\\
   \hline &\vspace{-.15cm}\\
$  -3$&$ \left[ -3n_3n_4,\, n_3n_4,\, 0,\, 3n_4,\, -n_4,\, -2,\, -n_4,\, 0,\, 1,\, 0\right]$\vspace{.15cm}\\
$   -3$&$ \left[ -n_3n_4n_5,\, 0,\, 0,\, n_4n_5,\, 0,\, -n_5,\, 0,\, 1,\, 0,\, 0\right]$\vspace{.15cm}\\
 $   -3$&$ \left[ -n_3,\, n_3,\, -n_3,\, 1,\, -1,\, 0,\, -1,\, 0,\, 0,\, 1\right]$\vspace{.15cm}\\
$ n-7$&$ \left[ 3n_3n_4,\, 3n_3n_4,\, 3n_3n_4,\, n_4n_7,\, n_4n_7,\, 22-4n,\, n_4n_7,\, 18,\, 22-4n,\, n_4n_7\right]$\vspace{.15cm}\\
  $  n-6$&$ \left[6n_3,\, 0,\, -3n_3,\, 2n_6,\, -2n_3,\, -6,\, n_3,\, 0,\, 3,\, -n_6\right]$\vspace{.15cm}\\
    $ n-4$&$ \left[ -6n_3,\, 0,\, 3n_3,\, -2n_4,\, -2n_1,\, 2,\, n_1,\, 0,\, -1,\, n_4\right]$\vspace{.15cm}\\
    $ n-3$&$ \left[ 3,\, -3,\, 3,\, 1,\, -1,\, 0,\, -1,\, 0,\, 0,\, 1\right]$\vspace{.15cm}\\
    $ 2n-9$&$ \left[ 3n_3,\, 3n_3,\, 3n_3,\, 2n-9,\, 2n-9,\, n_9,\, 2n-9,\, -9,\, n_9,\, 2n-9\right]$\vspace{.15cm}\\
$ 2n-6$&$ \left[ -6,\, 0,\, 3,\, -4,\, 2,\, -2,\, -1,\, 0,\, 1,\, 2\right]$\vspace{.15cm}\\
$ 3n-9$&$ \left[ 1,\, 1,\, 1,\, 1,\, 1,\, 1,\, 1,\, 1,\, 1,\, 1\right]$\\ \hline
\end{tabular}
 \caption{The eigenvalues and the (transposed) eigenvectors of the quotient matrix of $A(n,3)$}\label{eig3}
\end{table}

Let
\begin{align*}
S={\rm diag} &\left(\sqrt1, \sqrt3, \sqrt2, \sqrt{3(n-3)}, \sqrt{3(n-3)}, \sqrt{3(n-3)(n-4)}, \sqrt{6(n-3)},\right.\\ &\left.~~\sqrt{(n-3)(n-4)(n-5)},\sqrt{6(n-3)(n-4)}, \sqrt{6n-18}\right),
\end{align*}
and $P=SQS^{-1}$. Note that $\bf v$ is an eigenvector of $Q$ for the eigenvalue $\lambda$ if and only if $S\bf v$ is an eigenvector of $P$ for the eigenvalue $\lambda$. For any eigenvalue $\la$ of $Q$ with multiplicity $1$ and with eigenvector $\bf v$, setting $\w=S\bf v$,  the multiplicity of $\la$ as an eigenvalue of $A(n,3)$ is obtained by Lemma~\ref{god} as
$$n(n-1)(n-2)\frac{(\w)_1^2}{\w^\top\w}.$$
For instance,
 ${\bf v}=\left[ 3(n-3), 3(n-3), 3(n-3), 2n-9, 2n-9, n-9, 2n-9, -9, n-9, 2n-9\right]^\top$ is an eigenvector of $Q$ for the 1-fold eigenvalue $2n-9$. Hence,
{\small\begin{align*}
 {\bf w}=S{\bf v}= &\left[ 3(n-3), 3\sqrt3(n-3), 3\sqrt2(n-3), (2n-9)\sqrt{3(n-3)}, (2n-9)\sqrt{3(n-3)}, (n-9)\sqrt{3(n-3)(n-4)},\right. \\
& ~~ \left. (2n-9)\sqrt{6(n-3)}, -9\sqrt{(n-3)(n-4)(n-5)}, (n-9)\sqrt{6(n-3)(n-4)}, (2n-9)\sqrt{6n-18}\right]^\top,
\end{align*}}
 is an eigenvector of $P$ for the eigenvalue $2n-9$. The multiplicity of $2n-9$ as an eigenvalue of $A(n,3)$ is then equal to
$$n(n-1)(n-2)\frac{(\w)_1^2}{\bf w^\top\bf w}=n(n-1)(n-2)\frac{9(n-3)^2}{9n(n-2)(n-3)^2}=n-1.$$
By similar calculations, the multiplicities of other eigenvalues of  $A(n,3)$ can be determined (except for the eigenvalue $-3$).  The multiplicity of $-3$ is $n(n-1)(n-2)$ minus the sum of all the multiplicities of the rest of the eigenvalues.
}\end{proof}

\begin{thm}
For $n\ge5$, the eigenvalues of $A(n,4)$ are as follows:
$$\begin{array}{lll}
(-4)^{[n(n-3)(n^2-7n+8)+1]}& (n-10)^{[n(n-1)(n-5)/6]}& (n-9)^{[n(n-2)(n-4)]}\\
(n-8)^{[(n-1)(n-2)(n-3)/2]}& (n-7)^{[2n(n-2)(n-4)/3]}& (n-6)^{[n(n-1)(n-5)/2]}\\
(n-5)^{[n(n-2)(n-4)]}& (n-4)^{[(n-1)(n-2)(n-3)/6]}& (2n-14)^{[n(n-3)/2]}\\
(2n-12)^{[3(n-1)(n-2)/2]}& (2n-10)^{[3n(n-3)/2]}& (2n-8)^{[5n(n-3)/2+3]}\\
(3n-16)^{[n-1]}& (3n-12)^{[3(n-1)]}& (4n-16)^{[1]}.
\end{array}$$
\end{thm}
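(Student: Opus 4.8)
The plan is to repeat, for the twenty cells of the cycle-type partition of $V(n,4)$ (recall $c(4)=20$ by Remark~\ref{c(k)}), precisely the strategy used above for $A(n,3)$. First I would dispose of the small cases. For a few small values of $n$ the listed multiplicities degenerate, or several of the fifteen values coincide (just as the multiplicities of $-3$ and $n-7$ merged at $n=4$ for $A(n,3)$); I would therefore confirm these cases directly by computer and then assume $n$ large enough that all fifteen eigenvalues are genuinely distinct.

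Next I would assemble the $20\times20$ quotient matrix $Q$. By Theorem~\ref{part} the cycle-type partition is equitable, and Theorem~\ref{qutiont} supplies every entry: for each representative cycle type $[A,B]$ (ranging over the partitions of $4$ into parts of two kinds), cases (i)--(iii) list the neighbouring cells together with the number of neighbours, each count being an affine function of $n$. Collecting these numbers into a matrix indexed by the twenty cells yields $Q$, the analogue of the $10\times10$ matrix displayed for $A(n,3)$.

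The heart of the proof is to diagonalize $Q$. Since $A(n,4)$ is vertex-transitive, hence walk-regular, and the cell of type $1^4$ is the singleton $\{(1,2,3,4)\}$, Lemma~\ref{god} applies. I would compute the characteristic polynomial of $Q$ symbolically and factor it; I expect it to split into linear factors $\lambda-\theta$ over the fifteen listed values $\theta$, with the smallest eigenvalue $-4$ arising as a repeated root. For each simple eigenvalue $\theta\ne-4$ of $Q$ with eigenvector ${\bf v}$, I would set $S=\mathrm{diag}\big(\sqrt{|V_1|},\dots,\sqrt{|V_{20}|}\big)$, where the cell sizes $|V_i|$ are given by the counting formula $\tfrac{k!}{\prod_i(i^{a_i}a_i!b_i!)}(n-k)_s$ proved above, put $\w=S{\bf v}$, and read off the multiplicity of $\theta$ in $A(n,4)$ as
$$\nu\,\frac{(\w)_1^2}{\w^\top\w},\qquad \nu=n(n-1)(n-2)(n-3),$$
by Lemma~\ref{god}(ii). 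The multiplicity of $-4$ is then obtained, exactly as in the $k=3$ case, by subtracting the sum of all other multiplicities from $\nu$.

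The main obstacle is purely computational: factoring the characteristic polynomial of a $20\times20$ matrix whose entries depend on the parameter $n$, and extracting closed-form eigenvectors, is heavy and essentially requires computer algebra. A secondary subtlety is whether any eigenvalue of $Q$ other than $-4$ fails to be simple; should some $\theta$ have multiplicity at least two in $Q$, Lemma~\ref{god}(ii) requires a full orthonormal eigenbasis $\x_1,\dots,\x_\ell$ for $\theta$, and its multiplicity in $A(n,4)$ becomes $\nu\sum_{i=1}^\ell(\x_i)_1^2$, so I would need to produce such a basis explicitly. Finally, checking that the fifteen resulting multiplicities are nonnegative integers summing to $\nu=n(n-1)(n-2)(n-3)$ serves as a built-in consistency test for the entire calculation.
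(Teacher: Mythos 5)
Your proposal follows essentially the same route as the paper's proof: computer verification of small cases, assembly of the $20\times20$ quotient matrix from Theorem~\ref{qutiont}, diagonalization combined with Lemma~\ref{god}(ii) via $P=SQS^{-1}$ to extract multiplicities, and subtraction to obtain the multiplicity of $-4$. The subtlety you flag about non-simple eigenvalues other than $-4$ does in fact arise --- in the paper $2n-8$ has multiplicity $2$ in $Q$ --- and the paper resolves it exactly as you propose, by orthogonalizing two eigenvectors of $P$ and summing $\nu\bigl((\bu_1)_1^2/\bu_1^\top\bu_1+(\bu_2)_1^2/\bu_2^\top\bu_2\bigr)$ to get $5n(n-3)/2+3$.
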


\begin{proof}{The eigenvalues of $A(5,4)$, $A(6,4)$ and $A(7,4)$ are determined by a computer; these are
\begin{align*}
&\{-4^{[1]},\, -3^{[12]},\, -2^{[28]},\, -1^{[4]},\, 0^{[30]},\, 1^{[4]},\, 2^{[28]},\, 3^{[12]},\, 4^{[1]}\}, \\
&\{-4^{[42]},\, -3^{[48]},\, -2^{[39]},\, -1^{[32]},\, 0^{[45]},\, 1^{[48]},\, 2^{[42]},\, 4^{[48]},\, 6^{[15]},\, 8^{[1]}\},\ \textnormal{and} \\
&\{-4^{[225]},\, -3^{[14]},\, -2^{[105]},\, -1^{[60]},\, 0^{[84]},\, 1^{[42]},\, 2^{[150]},\, 3^{[20]},\, 4^{[42]},\, 5^{[6]}\, 6^{[73]},\, 9^{[18]},\, 12^{[1]}\},
\end{align*}
respectively, which agree with the assertion. So we may assume that $n\ge6$.

We consider the following order for the cycle types of 4-permutations:
$$1111, 112, 22, 13, 4, 1111', 121', 31',  111'1', 21'1', 112', 22', 11'1'1',  11'2', 13', 1'1'1'1', 1'1'2', 2'2', 1'3', 4'.$$
We  order the cells of the cycle-type partition according to the  above ordering.
From Theorem~\ref{qutiont}, the quotient matrix $Q$ of the cycle-type partition of $A(n,4)$ is computed as:
{\small$$\left(\begin{array}{cccccccccccccccccccc}
0& 0& 0& 0& 0& 4n_4& 0& 0& 0& 0& 0& 0& 0& 0& 0& 0& 0& 0& 0& 0\\ 0& 0& 0& 0& 0& 0& 2n_4& 0& 0& 0& 2n_4& 0& 0& 0& 0& 0& 0& 0& 0& 0\\ 0& 0& 0& 0& 0& 0& 0& 0& 0& 0& 0& 4n_4& 0& 0& 0& 0& 0& 0& 0& 0\\ 0& 0& 0& 0& 0& 0& 0& n_4& 0& 0& 0& 0& 0& 0& 3n_4& 0& 0& 0& 0& 0\\
 0& 0& 0& 0& 0& 0& 0& 0& 0& 0& 0& 0& 0& 0& 0& 0& 0& 0& 0& 4n_4\\ 1& 0& 0& 0& 0& n_5& 0& 0& 3n_5& 0& 3& 0& 0& 0& 0& 0& 0& 0& 0& 0\\ 0& 1& 0& 0& 0& 0& n_5& 0& 0& n_5& 0& 1& 0& 2n_5& 2& 0& 0& 0& 0& 0\\0& 0& 0& 1& 0& 0& 0& n_5& 0& 0& 0& 0& 0& 0& 0& 0& 0& 0& 3n_5& 3\\ 0& 0& 0& 0& 0& 2& 0& 0& 2n_6& 0& 2& 0& 2n_6& 4& 0& 0& 0& 0& 0& 0\\ 0& 0& 0& 0& 0& 0& 2& 0& 0& 2n_6& 0& 2& 0& 0& 0& 0& 2n_6& 0& 4&0\\ 0& 1& 0& 0& 0& 1& 0& 0& n_5& 0& n_5& 0& 0& 2n_5& 2& 0& 0& 0& 0& 0\\ 0& 0& 1& 0& 0& 0& 1& 0& 0& n_5& 0& n_5& 0& 0& 0& 0& 0& 2n_5& 0& 2\\ 0& 0& 0& 0& 0& 0& 0& 0& 3& 0& 0& 0& 3n_7& 6& 0& n_7& 3& 0& 0& 0\\ 0& 0& 0& 0& 0& 0& 1& 0& 1& 0& 1& 0& n_6& 2n-11& 2& 0& n_6& 1& 1& 0\\ 0& 0& 0& 1& 0& 0& 1& 0& 0& 0& 1& 0& 0& 2n_5& n_5& 0& 0& 0& n_5& 1\\ 0& 0& 0& 0& 0& 0& 0& 0& 0& 0& 0& 0& 4& 0& 0& 4n_8& 12& 0& 0& 0\\ 0& 0& 0& 0& 0& 0& 0& 0& 0& 1& 0& 0& 1& 2& 0& n_7& 3n-19& 2& 4& 0\\ 0& 0& 0& 0& 0& 0& 0& 0& 0& 0& 0& 2& 0& 2& 0& 0& 2n_6& 2n_6& 2& 2\\ 0& 0& 0& 0& 0& 0& 0& 1& 0& 1& 0& 0& 0& 1& 1& 0& 2n_6& 1& 2n-11 & 2\\ 0& 0& 0& 0& 1& 0& 0& 1& 0& 0& 0& 1& 0& 0& 1& 0& 0& n_5& 2n_5& n_5\\
\end{array}\right).$$}

By computation, we obtain the eigenvalues and  eigenvectors of $Q$  as given in Table~\ref{eig4}.

\begin{table}[h]
  \centering
\begin{tabular}{rl}
\hline
   Eigenvalue & Eigenvector\\
   \hline &\vspace{-.18cm}\\
 $-4$&{\footnotesize$\left[5n_4n_5n_6 ,\, 0,\, -n_4n_5n_6,\, -n_4n_5n_6,\, n_4n_5n_6,\,
        -5 n_5n_6,\, 0,\, n_5n_6,\,5 n_6,\, -n_6,\, 0,\, n_5n_6 ,\, -3,\, -n_6,\, n_5n_6,\, 0,\, 1,\, 0,\, 0,\, -n_5n_6\right]$}\vspace{.18cm}\\
  $-4$&{\small$\left[2n_4n_5,\, 0,\, 0,\, -n_4n_5,\, n_4n_5,\,-2n_5,\, 0,\, n_5,\, 2,\, 0,\, 0,\, 0,\, 0,\, -1,\, n_5,\, 0,\, 0,\, 1,\, 0,\,-n_5\right]$}\vspace{.18cm}\\
  $-4$&{\small$\left[2n_4n_5,\, 0,\, -2n_4n_5,\, -n_4n_5,\, 2n_4n_5,\, -2n_5,\, 0,\, n_5,\, 2,\, -2,\, 0,\, 2n_5,\, 0,\, -1,\, n_5,\, 0,\, 0,\, 0,\, 1,\, -2n_5\right]$}\vspace{.18cm}\\
  $-4$&{\small$\left[n_4,\, -n_4,\, n_4,\, n_4,\, -n_4,\, -1,\, 1,\, -1,\, 0,\, 0,\, 1,\, -1,\, 0,\, 0,\, -1,\, 0,\, 0,\, 0,\, 0,\, 1\right]$}\vspace{.18cm}\\
  $-4$&{\small$\left[n_4n_5n_6n_7,\, 0,\, 0,\, 0,\, 0,\, -n_5n_6n_7,\, 0,\, 0,\, n_6n_7,\, 0,\, 0,\, 0,\,  -n_7,\, 0,\, 0,\, 1,\, 0,\, 0,\, 0,\, 0\right]$}\vspace{.18cm}\\
     $3 n-16$&{\small$\left[4n_4,\,4n_4,\,4n_4,\,4n_4,\,4n_4,\, 3n-16,\,  3n-16,\,  3n-16,\,-2n_8,\, -2n_8,\, 3n-16,\,  3n-16,\,n_{16}-2n_8,\,\right.$}\\
     &~~{\small$\left.3n-16,\, - 16,\, n - 16,\, -2n_8,\,-2n_8,\, 3n-16\right]$}\vspace{.18cm}\\
 $ n-10 $&{\small$\left [-4n_4n_5n_6,\, -4n_4n_5n_6,\, -4n_4n_5n_6,\, -4n_4n_5n_6,\, -4n_4n_5n_6,\, -n_5n_6n_{10},\, -n_5n_6n_{10},\, -n_5n_6n_{10},\, 4n_6n_8,\right.$}\\ &~~{\small$\left. 4n_6n_8,\,-n_5n_6n_{10},\, -n_5n_6n_{10},\, 132-18n,\, 4n_6n_8,\, -n_5n_6n_{10},\, 96,\, 132-18n,\, 4n_6n_8,\, 4n_6n_8,\, -n_5n_6n_{10}\right]$}\vspace{.18cm}\\
  $n-9$& {\small$\left[-12n_4n_5,\, -4n_4n_5,\, 4n_4n_5,\, 0,\, 4n_4n_5,\, -3n_5n_9,\, 4n_5,\, 3(n-5)^2/2,\, 10n-66,\, -2n_5,\, -2n_5n_7,\, \right.$}\\
    &~~{\small$\left.n_5n_9,\,-24,\, 3n-19,\, -(n-5)^2/2,\, 0,\, 8,\, -4n_7,\, 19-3n,\, n_5n_9\right]$}\vspace{.18cm}\\
  $n-8 $& {\small$\left[-12n_4,\, 4n_4,\, 4n_4,\, 0,\, -4n_4,\, -3n_8,\, 3n-16,\, -3n_4,\, 8,\, -8,\, -n,\, n_8,\, 0,\, -4,\, n_4,\, 0,\, 0,\, 0,\, 4,\, -n_8\right]$}\vspace{.18cm}\\
  $n - 7$& {\small$\left[4 n_4,\, 0,\, 4 n_4,\, -2n_4,\, 0,\, n_7,\, 0,\, - n_7/2,\, -2,\, -2,\, 0,\, n_7,\, 0,\, 1,\, -n_7/2,\, 0,\, 0,\, -2,\, 1,\, 0\right]$}\vspace{.18cm}\\
  $n - 6$&{\small$\left[-12n_4n_5,\, -4n_4n_5,\, 4n_4n_5,\, 0,\, 4n_4n_5,\, -3n_5n_6,\, -n_5(3n-10),\, -3n_2n_5,\, 4n_6,\, 4n_2,\, (n+2)n_5,\,\right.$}\\
       &~~{\small$\left.n_5n_6,\, -6,\, -4,\, n_2n_5,\, 0,\, 2,\, -4n_4,\, 4,\,n_5n_6\right]$}\vspace{.18cm}\\
  $n - 5$&{\small$\left [-12 n_4,\, 4 n_4,\, 4 n_4 ,\, 0,\, -4 n_4,\, -3 n_5,\, -4,\,n_3/2,\,2,\, -2,\, 2 n_3,\, n_5,\, 0,\, -1,\, - n_1/2,\, 0,\, 0,\,
       0,\, 1,\, -n_5\right]$}\vspace{.18cm}\\
  $n - 4$& {\small$\left[-4,\, 4,\, -4,\, -4,\, 4,\, -1,\, 1,\, -1,\, 0,\, 0,\, 1,\, -1,\, 0,\, 0,\, -1,\, 0,\, 0,\, 0,\,0,\, 1\right]$}\vspace{.18cm}\\
  $2 n - 14$&{\small$\left[-6n_4n_5,\, -6n_4n_5,\, -6n_4n_5,\, -6n_4n_5,\, -6n_4n_5,\, -3n_5n_7,\, -3n_5n_7,\, -3n_5n_7,\, \tau,\, \tau,\,-3n_5n_7,\,\right.$}\\
     &~~{\small$\left.  -3n_5n_7,\, 9n_3,\, \tau,\, -3n_5n_7,\, -72,\, 9n_3,\,\tau,\,\tau,\, -3n_5n_7\right]$ ~where $\tau=21n-n^2-92$}\vspace{.18cm}\\
  $2 n - 12$&{\footnotesize$\left[-6n_4, -2n_4,\, 2n_4, 0, 2n_4,\, 18-3n,\, 2, 3n_{12}/2, -n_{12},\, n_4, -2n_5,\, n_6,\, 6, -n_8/2, -n_4/2,\, 0, -2, -4, n_4/2,\, n-6\right]$}\vspace{.18cm}\\
  $2 n - 10$&{\small$\left [6n_4,\, 2n_4,\, -2n_4,\, 0,\, -2n_4,\, 3n_5,\, n_3,\, 3,\, n_8,\, n,\, n_7,\, -n_5,\, -3,\, -2,\, -1,\, 0,\, 1,\, -n_4,\, 2,\, -n_5\right]$}\vspace{.18cm}\\
  $2 n - 8$&{\small$\left[12,\, -2,\, 4,\, -3,\, 2,\, 6,\, -2,\, 0,\, 2,\, 0,\, 0,\, 2,\, 0,\, -1,\, -2,\, 0,\, 0,\,1,\, 0,\, 1\right]$}\vspace{.18cm}\\
  $2 n - 8$&{\small$\left [12,\, -4,\, -4,\, 0,\, 4,\, 6,\, -4,\, 3,\, 2,\, -2,\, 0,\, -2,\, 0,\, -1,\, -1,\, 0,\, 0,\,0,\, 1,\, 2\right]$}\vspace{.18cm}\\
  $3 n - 12$&{\small$\left[12,\, 4,\, -4,\, 0,\, -4,\, 9,\, 1,\, -3,\, 6,\, -2,\, 5,\, -3,\, 3,\, 2,\, 1,\, 0,\, -1,\,-2,\, -2,\, -3\right]$}\vspace{.18cm}\\
  $4 n - 16$&{\small$\left[1,\, 1,\, 1,\, 1,\, 1,\, 1,\, 1,\, 1,\, 1,\, 1,\, 1,\, 1,\, 1,\, 1,\, 1,\, 1,\, 1,\, 1,\, 1,\, 1\right]$}\vspace{.18cm}\\
\hline
\end{tabular}
 \caption{The eigenvalues and the (transposed) eigenvectors of the quotient matrix of $A(n,4)$}\label{eig4}
\end{table}

Let
{\small\begin{align*}
S={\rm diag} &\left(
\sqrt1, \sqrt6, \sqrt3, \sqrt8, \sqrt6, \sqrt{4(n-4)}, \sqrt{12(n-4)}, \sqrt{8(n-4)}, \sqrt{6(n-4)(n-5)}, \sqrt{6(n-4)(n-5)},\sqrt{12(n-4)}, \right.\\
&~~ \sqrt{12(n-4)}, \sqrt{4(n-4)(n-5)(n-6)}, \sqrt{24(n-4)(n-5)}, \sqrt{24(n-4)}, \sqrt{(n-4)(n-5)(n-6)(n-7)},\\
&~~\left. \sqrt{12(n-4)(n-5)(n-6)}, \sqrt{12(n-4)(n-5)},\sqrt{24(n-4)(n-5)}, \sqrt{24(n-4)}\right),
\end{align*}}
and $P=SQS^{-1}$. For any eigenvalue $\la$ of $Q$ with multiplicity $1$ and with eigenvector $\bf v$, setting $\w=S\bf v$,  the multiplicity of $\la$ as an eigenvalue of $A(n,4)$ is obtained by Lemma~\ref{god} as
$$n(n-1)(n-2)(n-3)\frac{(\w)_1^2}{\w^\top\w}.$$
However, if the multiplicity of $\la$ for $Q$ is larger than $1$, we need to find an orthogonal set of eigenvectors for $\la$ and $P$.
Besides $-4$, only $2n-8$ is such an eigenvalue.
Set
\begin{align*}
\w_1&= S\left[12, -2, 4, -3, 2, 6, -2, 0, 2, 0, 0, 2, 0, -1, -2, 0, 0,1, 0, 1\right]^\top,\\
\w_2&=S\left [12, -4, -4, 0, 4, 6, -4, 3, 2, -2, 0, -2, 0, -1, -1, 0, 0,0, 1, 2\right]^\top.
\end{align*}
Now the vectors ${\bf u}_1,{\bf u}_2$ with ${\bf u}_1=\w_1$ and ${\bf u}_2=\w_2-\frac{\w_2^\top\w_2}{\w_1^\top\w_2}\w_1$
forms a set of orthogonal eigenvectors of $P$ for $2n-8$.
By Lemma~\ref{god}, the multiplicity of $2n-8$ as an eigenvalue of $A(n,4)$ is
\begin{align*}
   &n(n-1)(n-2)(n-3)\left(\frac{(\bu_1)_1^2}{\bu_1^\top\bu_1}+\frac{(\bu_2)_1^2}{\bu_2^\top\bu_2}\right)\\
&~~\,=n(n-1)(n-2)(n-3)\left(\frac{12}{6-15n+5n^2}+\frac{(n^2-3n+6)^2}{2n(n-1)(n-2)(n-3)(5n^2-15n+6)}\right)\\
&~~\,=\frac{5n(n-3)}{2}+3.
\end{align*}
Now, the multiplicity of $-4$ is $n(n-1)(n-2)(n-3)$ minus the sum of all the multiplicities of the rest of the eigenvalues.
}\end{proof}

\subsection{Eigenvalues of $A(n,k)$ for $k=5,6,7$}

In a similar fashion as for $A(n,3)$ and $A(n,4)$, we are able to determine the complete set of eigenvalues of more families of the arrangement graphs.
 The eigenvalues of $A(n,k)$ for $k=5,6,7$ are given in Tables~\ref{eig5}, \ref{eig6} and \ref{eig7}.
We would like to point out that by using our method it is possible to compute the eigenvalues of the graphs $A(n,k)$ for some larger values of $k>7$.
\vspace{.3cm}
\begin{table}
\centering
{\small$\begin{array}{lll}
\hline \vspace{-.25cm} \\
(-5)^{[n^5-15n^4+75n^3-145n^2+89n-1]}& ( n-13)^{[n(n-1)(n-2)(n-7)/24]}& ( n-12)^{[n(n-1)(n-3)(n-6)/2]}\\
(n-11)^{[n(n-5)(7n)^2-35n+37)/6]}& (n-10)^{[(n-1)(n-2)(n-3)(n-4)/6]}& (n-9)^{[5n(n-3)(n^2-7n+8)/4]}\\
( n-8)^{[n(n-1)(n-2)(n-7)/6]}& (n-7)^{[n(n-1)(7n)^2-63n+131)/6]}& ( n-6)^{[n(n-2)(n-3)(n-5)/2]}\\
(n-5)^{[(n-1)(n-2)(n-3)(n-4)/24]}& ( 2n-19)^{[n(n-1)(n-5)/6]}& ( 2n-17)^{[4n(n-2)(n-4)/3]}\\
 (  2n-15)^{[(n-1)(n-2)(n-3)]}& ( 2n-14)^{[n(7n)^2-42n+50)/2]}& ( 2n-12)^{[2n(n-2)(n-4)]}\\
 ( 2n-11)^{[5n(n-1)(n-5)/6]}& ( 2n-10)^{[(n-2)(7n^2-28n+6)/3]}& (3n-23)^{[n(n-3)/2]}\\
 ( 3n-20)^{[2(n-1)(n-2)]}& ( 3n-18)^{[2n(n-3)]}& ( 3n-15)^{[(11n^2-33n+12)/2]}\\
 (  4n-25)^{[n-1]}& (  4n-20)^{[4n-4]}& ( 5n-25)^{[1]}
\\ \hline
\end{array}$}
\caption{The eigenvalues of $A(n,5)$}\label{eig5}
\end{table}

\begin{table}
\centering
\end{table}

\begin{table}
{\small$\begin{array}{lll}
\hline \vspace{-.25cm} \\
(-6)^{[n^6-21n^5+160n^4-545n^3+814n^2-415n+1]}&
(n-16)^{[n(n-1)(n-2)(n-3)(n-9)/120]}&
(n-15)^{[n(n-1)(n-2)(n-4)(n-8)/6]}\\
 (n-14)^{[n(n-1)(n-7)(7n^2-49n+78)/8]}&
 (n-13)^{[n(n-3)(n-6)(n^2-6n+6)]}&
 (n-12)^{[(n-1)(n-2)(n-5)(n^2-7n+2)/4]}\\
  (n-11)^{[n(n-4)(7n^3-77n^2+217n-162)/4]}&
 (n-10)^{[n(n-1)(n-3)(n-4)(n-7)/4]}&
  (n-9)^{[n(n-1)(n-2)(n^2-12n+34)]}\\
 (n-8)^{[ n(n-1)(n-3)(7n^2-77n+202)/8]}&
 (n-7)^{[n(n-2)(n-3)(n-4)(n-6)/6]}&
  (n-6)^{[(n-1)(n-2)(n-3)(n-4)(n-5)/120]}\\
  (2n-24)^{[n(n-1)(n-2)(n-7)/24]}&
  (2n-22)^{[ 5n(n-1)(n-3)(n-6)/8]}&
   (2n-20)^{[n(n-5)(2n-3)(2n-7)/2]}\\
 (2n-18)^{[(7n^3-63n^2+136n-40)(n-1)/4]}&
 (2n-17)^{[2n(n-2)(n-3)(n-5)]}&
  (2n-16)^{[15n(n-1)(n-3)(n-6)/8]}\\
  (2n-15)^{[4n(n-1)(n-4)(n-5)/3]}&
 (2n-14)^{[n(n-2)(13n^2-104n+171)/8]}&
  (2n-13)^{[2n(n-1)(n-3)(n-6)]}\\
 (2n-12)^{[(7n^4-70n^3+217n^2-210n+20)/4]}&
   (3n-30)^{[n(n-1)(n-5)/6]}&
 (3n-27)^{[5n(n-4)(n-2)/3]}\\
 (3n-23)^{[3n(n-2)(n-2)]}&
 (3n-24)^{[5(n-4)(n-1)^2/2]}&
 (3n-21)^{[10n(n-2)(n-4)/3]}\\
  (3n-20)^{[3n(n-1)(n-5)/2]}&
 (3n-18)^{[(n-4)(47n^2-94n+15)/6]}&
   (4n-34)^{[n(n-3)/2]}\\
 (4n-30)^{[ 5(n-1)(n-2)/2]}&
  (4n-28)^{[ 5n(n-3)/2]}&
 (4n-24)^{[(19n^2-57n+20)/2]}\\
 (5n-36)^{[n-1]}&
  (5n-30)^{[5n-5]}&
   (6n-36)^{[ 1]}
\\ \hline
\end{array}$}
\caption{The eigenvalues of $A(n,6)$}\label{eig6}
\vspace{.7cm}

 \centering
{\small$\begin{array}{ll}
\hline \vspace{-.25cm} \\
(-7)^{[ n^7  - 28 n^6  + 301 n^5  - 1575 n^4  + 4179 n^3  - 5243 n^2  + 2372 n - 1]}&
(n - 19)^{[ n(n - 1) (n - 2) (n - 3) (n - 4)(n - 11)/720]}\\
(n - 18)^{[  n(n - 1) (n - 2) (n - 3)(n - 5)(n - 10)/24]}&
(n - 17)^{[ n (n - 1) (n - 2)(n - 9) (23 n^2  - 207 n + 439)/60]}\\
(n - 16)^{[  n (n - 1) (n - 8) (83 n^3  - 996 n^2  + 3691 n - 4182)/72]}&
(n - 15)^{[ n (n - 7) (11 n^4  - 154 n^3  + 739 n^2  - 1400 n + 844)/16]}\\
(n-14)^{[  (n - 1) (n - 2) (n - 6) (13 n^3  - 156 n^2  + 401 n - 10)/20]}&
(n - 13)^{[ 7n(n - 5)(n^4  - 16 n^3  + 80 n^2  - 151 n + 89)/6]}\\
(n - 12)^{[  n(n - 1) (n - 4)(13 n^3  - 208 n^2  + 1003 n - 1348)/20]}&
(n - 11)^{[ (11 n^2  - 165 n + 592) (n - 1) (n - 2) (n - 3) n/16]}\\
(n - 10)^{[n (n - 1) (n - 2)(83 n^3  - 1494 n^2  + 8593 n - 15822)/72]}&
(n - 9)^{[ n(n - 1) (n - 3) (n - 4)(23 n^2  - 299 n + 941) /60]}\\
(n - 8)^{[ n(n - 2) (n - 3) (n - 4)(n - 5)(n - 7)/24]}&
(n - 7)^{[ (n - 1) (n - 2) (n - 3) (n - 4) (n - 5) (n - 6)/720]}\\
(2n-29)^{[ n(n - 1) (n - 2) (n - 3)(n - 9)/120]}&
(2n -27)^{[ n(n - 1) (n - 2) (n - 4)(n - 8)/5]}\\
(2 n - 25)^{[ n(n - 1) (n - 7) (8 n^2  - 56 n + 89)/6]}&
(2n-23)^{[  n (n - 3)(n - 6)(17 n^2  - 102 n + 101)/8]}\\
(2 n - 22)^{[  n (n - 1) (n - 2)(11 n^2  - 132 n + 367)/10]}&
(2 n - 21)^{[ 5(n - 1) (n - 3) (n - 4)(3 n^2  - 21 n + 2)/8]}\\
(2 n - 20)^{[  5 n (n - 2) (n - 4)(n^2  - 9 n + 15)/3]}&
(2 n - 19)^{[ 7 (n - 7) (11 n^2  - 77 n + 122) n (n - 1)/20]}\\
(2 n - 18)^{[ n(n - 1) (n - 3) (n - 4)(n - 7)]}&
(2 n - 17)^{[  7 n (n - 1) (7 n^3  - 98 n^2  + 427 n - 568)/20]}\\
(2 n - 16)^{[ 5 n (n - 2) (n - 4)(n^2-9 n + 11)/3]}&
(2n -15)^{[ 7 n (n - 1) (n - 7) (3 n^2  - 21 n + 34)/8]}\\
(2 n - 14)^{[  (n - 3)(11 n^4  - 132 n^3  + 469 n^2  - 438 n + 20)/10]}&
(3n -37)^{[ n(n - 1) (n - 2) (n - 7)/24]}\\
(3 n - 34)^{[ 3n (n - 1) (n - 3) (n - 6)/4]}&
(3n-31)^{[ n(n - 5)(73 n^2  - 365 n + 382)/24]}\\
(3 n - 30)^{[ n (n - 1) (n - 2) (n - 7)/4]}&
(3 n - 29)^{[ 7 n (n - 1) (n - 3) (n - 6)/4]}\\
(3 n - 28)^{[ 5 (n - 1) (n - 2) (n - 3) (n - 4)/6]}&
(3 n - 27)^{[  5 n(n - 3)(5 n^2  - 35 n + 44)/4]}\\
(3n-26)^{[ 7n (n - 1) (n - 3) (n - 6) n/4]}&
(3 n - 25)^{[ 7 n (n - 1) (n^2  - 9 n + 19)/2]}\\
(3 n - 24)^{[ 5 n (n - 2) (n - 3)(n - 5)/2]}&
(3n-22)^{[ 7n (n - 1) (n - 2)  (n - 7)/12]}\\
(3n-23)^{[ 35 n(n - 1) (n - 3) (n - 6)/8]}&
(3 n - 21)^{[ (75n^4-750n^3 + 2233n^2  - 1958n + 120)/8]}\\
(4 n-43)^{[ n (n - 1) (n - 5)/6]}&
(4 n - 39)^{[ 2 n (n - 2) (n - 4)]}\\
(4 n - 36)^{[ n (n - 1) (n - 5)]}&
(4 n - 35)^{[ 5 (n - 1) (n - 2) (n - 3)/2]}\\
(4n-34)^{[  14 n (n - 2) (n - 4)/3]}&
(4 n - 32)^{[ 5 n (n - 2) (n - 4)]}\\
(4n-31)^{[  7 n (n - 1) (n - 5)/3]}&
(4 n - 28)^{[ (52n^3-312 n^2+425n^2 - 60)/3]}\\
(5 n - 47)^{[ n(n - 3)/2]}&
(5n-42)^{[ 3 (n - 1) (n - 2)]}\\
(5 n - 40)^{[ 3n(n - 3)]}&
(5 n - 35)^{[ (29n^2-87n+30)/2]}\\
(6n-49)^{[ n - 1]}&
(6 n - 42)^{[ 6(n -1)]}\\
(7 n - 49)^{[ 1]} \\ \hline
\end{array}$}
\caption{The eigenvalues of $A(n,7)$}\label{eig7}
\end{table}

\subsection{Connection with the Johnson graph}

We first recall the eigenvalues of the Johnson graph (see \cite[p. 179]{bh}).
\begin{lem} The eigenvalues of $J(n,k)$ are
  $$(k-i)(n-k-i)-i~~\hbox{with multiplicity}~{n\choose i}-{n\choose i-1}, ~~\hbox{for}~~i=0,\ldots,k.$$
\end{lem}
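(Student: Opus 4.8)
The plan is to realise the adjacency matrix of $J(n,k)$ as a polynomial in the natural \emph{raising} and \emph{lowering} operators on the Boolean lattice and to diagonalise it by exploiting the resulting $\mathfrak{sl}_2$-type commutation relation. For $0\le j\le n$ let $M_j$ be the complex vector space having the $j$-subsets of $[n]$ as an orthonormal basis, and define the up operator $U_j\colon M_j\to M_{j+1}$ by $U_j(W)=\sum_{|T|=j+1,\,T\supset W}T$ and the down operator $D_j\colon M_j\to M_{j-1}$ by $D_j(W)=\sum_{|S|=j-1,\,S\subset W}S$. A short count shows that on $M_k$ one has $U_{k-1}D_k=kI+A$, where $A$ is the adjacency matrix of $J(n,k)$: for a $k$-set $X$, travelling down to one of its $k$ lower $(k-1)$-subsets and back up returns $X$ exactly $k$ times, while each Johnson-neighbour $Y$ (with $|X\cap Y|=k-1$) is produced exactly once, via the common subset $X\cap Y$. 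Hence it suffices to diagonalise $U_{k-1}D_k$.

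The key ingredient is the commutation relation $D_{j+1}U_j-U_{j-1}D_j=(n-2j)I$ on $M_j$, which I would establish by a direct computation on a basis $j$-set. Using it, I would fix $i$ and $v\in\ker D_i$, set $v_m=U^{m}v$, and prove by induction on $m$ that
\begin{equation}
D_{i+m}\,v_m=m\,(n-2i-m+1)\,v_{m-1}.\notag
\end{equation}
Taking $m=k-i$ and applying $U_{k-1}$ then gives $U_{k-1}D_k\,v_{k-i}=(k-i)(n-k-i+1)\,v_{k-i}$, so every vector in $U^{k-i}(\ker D_i)$ is an eigenvector of $A=U_{k-1}D_k-kI$ with eigenvalue
$$(k-i)(n-k-i+1)-k=(k-i)(n-k-i)-i,$$
exactly the claimed value $\la_i$.

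It then remains to pin down the multiplicities and to check that no eigenspace is missed. Assuming (as we may, since $J(n,k)\cong J(n,n-k)$) that $k\le n/2$, the up operator $U_{i-1}$ is injective for $i\le n/2$, so its transpose $D_i$ is surjective and $\dim\ker D_i=\binom{n}{i}-\binom{n}{i-1}$; moreover the positivity of the scalars in the induction forces $U^{k-i}$ to be injective on $\ker D_i$, so the eigenspace $E_i:=U^{k-i}(\ker D_i)$ has precisely this dimension. Finally I would verify that the $\la_i$ are pairwise distinct — indeed $\la_i-\la_{i+1}=n-2i>0$ for $0\le i\le k-1$ — and that the dimensions telescope, $\sum_{i=0}^{k}\bigl(\binom{n}{i}-\binom{n}{i-1}\bigr)=\binom{n}{k}=\dim M_k$, so the spaces $E_0,\dots,E_k$ exhaust $M_k$ and are therefore exactly the eigenspaces of $A$.

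I expect the main obstacle to be the careful organisation of the second step: verifying the commutation relation and then pushing the induction through with the correct index bookkeeping, since the levels of $U$ and $D$ shift at every stage. Conceptually this is the statement that $M_k$ decomposes $S_n$-equivariantly as $\bigoplus_{i=0}^{k}S^{(n-i,i)}$ and that $A$, commuting with $S_n$, acts by a scalar on each multiplicity-free summand; but the up/down computation above yields both the eigenvalues and the multiplicities directly, without invoking Schur's lemma.
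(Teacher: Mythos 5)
Your proof is correct. Note first that the paper does not prove this lemma at all: it is quoted from Brouwer and Haemers \cite{bh}, where the spectrum of $J(n,k)$ comes out of the standard machinery of the Johnson association scheme (equivalently, the multiplicity-free decomposition of $M_k$ into the $S_n$-irreducibles $S^{(n-i,i)}$, with eigenvalues given by Eberlein polynomials). Your up/down operator argument is a genuinely different and self-contained route, and all the computations check out: $U_{k-1}D_k=kI+A$ is right (the $k$ returns to $X$ versus the unique common $(k-1)$-set $X\cap Y$ for each neighbour $Y$), the commutation relation $D_{j+1}U_j-U_{j-1}D_j=(n-2j)I$ is the standard $\mathfrak{sl}_2$ relation on the Boolean lattice, the induction constant $m(n-2i-m+1)$ is correct, and so is the final arithmetic $(k-i)(n-k-i+1)-k=(k-i)(n-k-i)-i$ and the gap $\la_i-\la_{i+1}=n-2i>0$; together with the telescoping dimension count this pins down the eigenspaces exactly, as you say. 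Two small points deserve a line each in a polished write-up. First, the injectivity of $U_{i-1}$ for $i\le n/2$, which you assert, follows from your own commutation relation: since $U_{j-1}=D_j^{\top}$, one has $D_{j+1}U_j=D_j^{\top}D_j+(n-2j)I$, which is positive definite for $j<n/2$, so each $U_j$ with $j<n/2$ is injective (this also re-proves the injectivity of $U^{k-i}$ on $\ker D_i$ without appealing to the positivity of the induction scalars). Second, your reduction to $k\le n/2$ via $J(n,k)\cong J(n,n-k)$ is not a mere convenience: for $k>n/2$ the stated multiplicity ${n\choose i}-{n\choose i-1}$ is negative when $i>n-k$, so the lemma as printed implicitly assumes $k\le n-k$ (or a range $i=0,\ldots,\min(k,n-k)$), and your symmetry observation, using that $(k-i)(n-k-i)-i$ is invariant under $k\mapsto n-k$, is exactly what is needed to cover it. Compared with the citation route, your argument costs a page of index bookkeeping but buys explicit eigenspaces $E_i=U^{k-i}(\ker D_i)$ and independence from the theory of association schemes; the scheme-theoretic approach, conversely, yields at one stroke the spectra of all distance graphs of the Johnson scheme, not just $J(n,k)$.
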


Let $\alpha_1,\ldots,\alpha_{n\choose k}$ be all the $k$-subsets of $[n]$. Let $V_i$ be the set of all permutations of $\alpha_i$.
If $|\alpha_i\cap \alpha_j|=k-1$, then each $\pi\in V_i$ is adjacent to exactly one $q\in V_j$ and if $|\alpha_i\cap \alpha_j|<k-1$, then no vertex of $V_i$ has a neighbor in $V_j$.
It follows that $(V_1,\ldots,V_n)$ is an equitable partition of $A(n,k)$ where its quotient matrix is the adjacency matrix of $J(n,k)$. So we come up with the following.

\begin{pro}\label{john} For every $i=0,\ldots,k$, $(k-i)(n-k-i)-i$ is an eigenvalue of $A(n,k)$ with multiplicity at least
${n\choose i}-{n\choose i-1}.$
\end{pro}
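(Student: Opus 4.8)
The plan is to build on the image partition $(V_1,\dots,V_{\binom{n}{k}})$ introduced just before the statement, where $V_i$ is the set of $k$-permutations $\pi$ with $\im(\pi)=\alpha_i$. The paragraph preceding the proposition already records that this is an equitable partition of $A(n,k)$ whose quotient matrix $Q$ equals the adjacency matrix of $J(n,k)$: indeed, for $i\ne j$ one checks that a vertex of $V_i$ has exactly one neighbor in $V_j$ when $|\alpha_i\cap\alpha_j|=k-1$ (replace the single element of $\alpha_i\setminus\alpha_j$ by the element of $\alpha_j\setminus\alpha_i$ in the unique position carrying it) and none otherwise, while within a cell there are no edges since two distinct permutations of the same $k$-set differ in at least two positions. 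Thus $q_{ij}=1$ precisely when $|\alpha_i\cap\alpha_j|=k-1$, so $Q=A(J(n,k))$ and in particular $Q$ is symmetric. With this in hand, the remaining task is to push the eigenvalues of $Q$ — together with their multiplicities — up to $A(n,k)$ and then substitute the known spectrum of $J(n,k)$.

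For the transfer I would use the characteristic matrix $B$ of the partition, the $0/1$ matrix of size $|V(n,k)|\times\binom{n}{k}$ whose $j$-th column is the indicator vector of $V_j$. Equitability is exactly the identity $AB=BQ$, where $A$ denotes the adjacency matrix of $A(n,k)$. Hence if $Q\mathbf{v}=\la\mathbf{v}$ then $A(B\mathbf{v})=BQ\mathbf{v}=\la(B\mathbf{v})$, and because the cells $V_j$ are nonempty and pairwise disjoint, $B$ has full column rank; therefore $B$ carries any linearly independent family of $\la$-eigenvectors of $Q$ to a linearly independent family of $\la$-eigenvectors of $A$. It follows that the multiplicity of each $\la$ as an eigenvalue of $A(n,k)$ is at least its multiplicity as an eigenvalue of $Q$. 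Since $Q$ is symmetric, this latter multiplicity is the full eigenspace dimension, which the recalled Johnson-graph lemma evaluates as $\binom{n}{i}-\binom{n}{i-1}$ for the eigenvalue $(k-i)(n-k-i)-i$; combining the two gives the claimed lower bound for every $i=0,\dots,k$.

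The only genuine obstacle is the multiplicity statement rather than the mere eigenvalue containment. The containment ``$\la$ is an eigenvalue of $Q$ $\Rightarrow$ $\la$ is an eigenvalue of $A$'' is the well-known fact already quoted in the text, but the lower bound on multiplicities requires the full-column-rank property of $B$ to preserve linear independence, and it requires the symmetry of $Q$ so that the $\la$-eigenspace of $Q$ genuinely has dimension $\binom{n}{i}-\binom{n}{i-1}$ (rather than only that $\la$ appears as a root of its characteristic polynomial). Everything else is either supplied by the preceding paragraph or is a direct substitution into the stated spectrum of $J(n,k)$.
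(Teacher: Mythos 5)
Your proposal is correct and follows essentially the same route as the paper: the paper's justification is precisely the observation that the image partition $(V_1,\ldots,V_{\binom{n}{k}})$ is equitable with quotient matrix equal to the adjacency matrix of $J(n,k)$, after which the known spectrum of the Johnson graph is substituted. The only difference is that you spell out the standard lifting argument ($AB=BQ$ with the full-column-rank characteristic matrix $B$, plus symmetry of $Q$ to equate algebraic and geometric multiplicities) that the paper leaves implicit, which is a legitimate and welcome filling-in of detail rather than a different approach.
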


\subsection{The smallest eigenvalue}

From Proposition~\ref{john} it follows that $-k$ is an eigenvalue of $A(n,k)$ with multiplicity at least ${n\choose k}-{n\choose k-1}$.
In this section we establish that $-k$ is indeed the smallest eigenvalue, but with a much larger multiplicity.

\begin{thm} If $n\ge2k$, then $-k$ is the smallest eigenvalue of $A(n,k)$ with multiplicity at least $n(n-1)\cdots(n-k+2)(n-2k+1)$.
\end{thm}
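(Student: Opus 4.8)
The plan is to exhibit a partition of the edge set of $A(n,k)$ into cliques and then exploit the positive semidefiniteness of the associated vertex--clique incidence matrix. For each position $i\in[k]$ and each injective assignment of the remaining $k-1$ coordinates by values from $[n]$, the $k$-permutations obtained by letting the $i$-th coordinate range over all admissible values form a clique of size $n-k+1$ in $A(n,k)$, since any two of them agree in exactly $k-1$ positions. First I would check that these cliques partition the edge set: every edge joins two $k$-permutations differing in a single coordinate $i$, and such an edge lies in exactly the one clique determined by $i$ together with the common values in the other $k-1$ coordinates. I would also record that each vertex lies in exactly $k$ such cliques (one per coordinate) and that the total number of cliques is $m=k\,(n)_{k-1}$, while $\nu:=|V(n,k)|=(n)_k$.

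Next I would form the $\nu\times m$ vertex--clique incidence matrix $N$, whose $(\pi,C)$-entry is $1$ if $\pi\in C$ and $0$ otherwise, and verify the identity $NN^\top=M+kI$, where $M$ is the adjacency matrix of $A(n,k)$. The diagonal entry of $NN^\top$ at $\pi$ equals the number of cliques through $\pi$, namely $k$; the off-diagonal entry at $(\pi,\rho)$ counts the cliques containing both $\pi$ and $\rho$, which equals $1$ when $\pi$ and $\rho$ are adjacent and $0$ otherwise. The point to verify carefully here is that two distinct vertices lie in at most one common clique: a common clique forces $\pi$ and $\rho$ to agree off its free coordinate, and adjacency then pins down that coordinate and the shared values uniquely.

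From $NN^\top=M+kI\succeq0$ I would conclude that every eigenvalue of $A(n,k)$ is at least $-k$. The multiplicity of $-k$ equals $\dim\ker(M+kI)=\dim\ker(NN^\top)=\dim\ker(N^\top)=\nu-\operatorname{rank}(N)$. Since $N$ has only $m$ columns, $\operatorname{rank}(N)\le m$, whence the multiplicity is at least
$$
\nu-m=(n)_k-k\,(n)_{k-1}=(n)_{k-1}\bigl((n-k+1)-k\bigr)=n(n-1)\cdots(n-k+2)(n-2k+1).
$$
When $n\ge2k$ this quantity is positive, so $-k$ is actually attained; together with the lower bound $-k$ on the whole spectrum this shows that $-k$ is the smallest eigenvalue and yields the claimed multiplicity bound.

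The main difficulty is structural rather than computational: the entire argument rests on correctly establishing that the edges split into cliques with each vertex lying in exactly $k$ of them, so that the identity $NN^\top=M+kI$ holds exactly. Once this is in place, the multiplicity estimate is merely the trivial bound $\operatorname{rank}(N)\le m$ coming from the column count, and no delicate rank computation is needed. As a consistency check, Proposition~\ref{john} with $i=k$ already exhibits $-k$ as an eigenvalue of multiplicity at least $\binom{n}{k}-\binom{n}{k-1}$, a quantity that is positive precisely when $n\ge2k$, matching the hypothesis of the theorem.
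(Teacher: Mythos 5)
Your proof is correct and is essentially the paper's own argument in a different dress: your vertex--clique incidence matrix $N$ is exactly the transpose of the paper's incidence matrix $M$ of $(k-1)$-cliques versus $k$-cliques of its auxiliary graph $H_{n,k}$ (your cliques are the $(k-1)$-cliques, the vertices are the $k$-cliques), so your identity $NN^\top = A + kI$ and the bound $\operatorname{rank}(N)\le k\,(n)_{k-1}$ coincide with the paper's $A = M^\top M - kI$ and its row count, giving the same multiplicity lower bound $(n)_{k-1}(n-2k+1)$. If anything, you are slightly more explicit than the paper in observing that positivity of this quantity for $n\ge 2k$ is what guarantees $-k$ is actually attained and hence is the smallest eigenvalue.
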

\begin{proof}{
Consider the complete $k$-partite graph $K_{n,\ldots,n}$ and let $a_{i1},\ldots,a_{in}$ be the vertices of the $i$th part for $i=1,\ldots,k$.
We remove the edges $\{a_{ij}a_{rj}\mid 1\le j\le n, 1\le i,r\le k\}$ and denote the resulting graph by $H_{n,k}$.
A set of vertices $\{a_{1j_1},\ldots,a_{kj_k}\}$ forms a $k$-clique in $H_{n,k}$ if and only if $(j_1,\ldots,j_k)$ is a $k$-permutation of $[n]$.
Hence there is a one-to-one correspondence between the $k$-cliques of $H_{n,k}$ and the $k$-permutations of $[n]$.
For a $k$-permutation $\pi$ we denote the corresponding $k$-clique by $C(\pi)$.
Now, two $k$-permutations $\pi_1$ and $\pi_2$ are adjacent in $A(n,k)$ if and only if $|C(\pi_1)\cap C(\pi_2)|=k-1$.
Let $M$ denote the incidence matrix of $(k-1)$-cliques versus $k$-cliques of $H_{n,k}$, that is the rows and columns of
$M$ are indexed by the $(k-1)$-cliques and the $k$-cliques of $H_{n,k}$, respectively, where $M(C,C')=1$ if $C\subset C'$ and
$M(C,C')=0$ otherwise.
It is straightforward to see that the adjacency matrix of $A(n,k)$ is equal to $M^\top M-kI$.
It follows that  $-k$ is the smallest eigenvalue of $A(n,k)$.
As $M$ has $n(n-1)\cdots(n-k+2)k$ rows and $n(n-1)\cdots(n-k+1)$ columns, it follows that the multiplicity of $-k$ as an eigenvalue of $A(n,k)$ is
 at least $n(n-1)\cdots(n-k+2)(n-2k+1)$.
}\end{proof}

We close the paper with some open problems on the eigenvalues of the arrangement graphs.

The main problem we would like to put forward is the following:

\noindent{\bf Problem 1.} What are the eigenvalues of the arrangement graph $A(n,k)$?

Our observations allow us to narrow down Problem 1 to the following more specific conjectures:

\noindent{\bf Conjecture 2.} The eigenvalues of the arrangement graphs $A(n,k)$ consist entirely of integers.\footnote{Recently, Conjecture~2 was settled affirmatively in \cite{cgw}.}

\noindent{\bf Conjecture 3.} For any integer $k$, there is an integer $n_0$ such that for all $n\ge n_0$, $-k$ is the only negative eigenvalue of $A(n,k)$.

\section*{Acknowledgments}
 This project is supported by University of Malaya (OCAR Canseleri B21610). This work was completed while the second author was visiting Institute of Mathematical Sciences, University of Malaya.
He thanks the institute for their hospitality and support. He also would like to thank Ali Mohammadian for introducing to him the arrangement graphs and pointing out the problem of investigating their eigenvalues. The research of the second author was partially supported by a grant from IPM (No. 92050114).
The authors are grateful to an anonymous referee for several helpful comments and suggestions.

\end{document}